\def\COMMENT#1{}
\let\COMMENT=\footnote
\newcommand{\eps}{\varepsilon}
\newcommand{\s}{\alpha_t(G_{n,p})}
\newcommand{\atp}[1]{\alpha_{#1,p}(n)}
\newcommand{\sk}{\alpha_t^{(k)}(G_{n,p})}
\newcommand{\prob}{\,\mathbb{P}}
\newcommand{\Pa}{{\mathcal P}}
\newcommand{\ex}{\,\mathbb{E}}
\newcommand{\la}{\lambda}
\newcommand{\D}{{\mathcal D}}
\newtheorem{firsttheorem}{Proposition}
\newtheorem{theorem}[firsttheorem]{Theorem}
\newtheorem{lemma}[firsttheorem]{Lemma}
\begin{document}
\title{The $t$-stability number of a random graph}
\author{
Nikolaos Fountoulakis \\
\small Max-Planck-Institut f\"ur Informatik \\
\small Campus E1 4 \\ 
\small Saarbr\"ucken 66123 \\
\small Germany \\
\and Ross J. Kang\footnote{Part of this work was completed while this author was a doctoral student at the University of Oxford; part while he was a postdoctoral fellow at McGill University. He was supported by NSERC (Canada) and the Commonwealth Scholarship Commission (UK).} \\
\small School of Engineering and Computing Sciences \\
\small Durham University \\
\small South Road, Durham  DH1 3LE \\
\small United Kingdom \\
\and Colin McDiarmid \\
\small Department of Statistics \\
\small University of Oxford \\
\small 1 South Parks Road \\ 
\small Oxford OX1 3TG \\
\small United Kingdom
}
\date{26 October 2010\\
\small Mathematics Subject Classification: 05C80, 05A16} 
\maketitle
\begin{abstract}
Given a graph $G = (V,E)$, a vertex subset $S \subseteq V$ is called {\em $t$-stable} (or {\em $t$-dependent}) if the subgraph $G[S]$ induced on 
$S$ has maximum degree at most $t$.  The {\em $t$-stability number} $\alpha_t(G)$ of $G$ is the maximum order of a $t$-stable set in $G$. 
The theme of this paper is the typical values that this parameter takes on a random graph on $n$ vertices and edge probability equal to $p$.  
For any fixed $0 < p < 1$ and fixed non-negative integer $t$, we show that, with probability tending to $1$ as $n\to \infty$, the $t$-stability number takes on
 at most two values which we identify as functions of $t$, $p$ and $n$.  The main tool we use is an asymptotic expression for the expected number of $t$-stable sets of order $k$. We derive this expression by performing a precise count of the number of graphs on $k$ vertices that have maximum degree at 
most $k$.
\end{abstract}
\section{Introduction}
Given a graph $G = (V,E)$, a vertex subset $S \subseteq V$ is called {\em $t$-stable} (or {\em $t$-dependent}) if the subgraph $G[S]$ induced on $S$ has
 maximum degree at most $t$.  The {\em $t$-stability number} $\alpha_t(G)$ of $G$ is the maximum order of a $t$-stable set in $G$.  The main topic of
 this paper is to give a precise formula for the $t$-stability number of a dense random graph. 

The notion of a $t$-stable set is a generalisation of the notion of a stable set. Recall that a set of vertices $S$ of a graph $G$ is \emph{stable} if no two of its
 vertices are adjacent. In other words, the maximum degree of $G[S]$ is 0, and therefore a stable set is a 0-stable set.

The study of the order of the largest $t$-stable set is motivated by the study of the \emph{$t$-improper chromatic number} of a graph. A $t$-\emph
{improper colouring} of a graph $G$ is a vertex colouring with the property that every colour class is a $t$-stable set, and the $t$-\emph{improper chromatic
 number} $\chi_t(G)$ of $G$ is the least number of colours necessary for a $t$-improper colouring of $G$.  Obviously, a 0-improper colouring is a proper
 colouring of a graph, and the 0-improper chromatic number is the chromatic number of a graph.

The $t$-improper chromatic number is a parameter that was introduced and studied independently by Andrews and Jacobson~\cite{AnJa85}, Harary and
 Fraughnaugh (n\'ee Jones)~\cite{Har85,HaJo85}, and by Cowen \emph{et al.}~\cite{CCW86}.  The importance of the $t$-stability number in relation to the
 $t$-improper chromatic number comes from the following obvious inequality: if $G$ is a graph that has $n$ vertices, then 
\begin{align*}
\chi_t(G) \ge \frac{n}{\alpha_t(G)}.
\end{align*}

The $t$-improper chromatic number also arises in a specific type of radio-frequency assignment problem. Let us assume that the vertices of a given graph represent transmitters and an edge between two vertices indicates that the corresponding transmitters interfere. Each interference creates some amount of noise which we denote by $N$.  Overall, a transmitter can tolerate up to a specific amount of noise which we denote by $T$. The problem now is to assign frequencies to the transmitters and, more specifically, to assign as few frequencies as possible, so that we minimise the use of the electromagnetic spectrum. Therefore, any given transmitter cannot be assigned the same frequency as more than $T/N$ nearby transmitters --- that is, neighbours in the transmitter graph --- as otherwise the excessive interference would distort the transmission of the signal. In other words, the vertices/transmitters that are assigned a certain frequency must form a $T/N$-stable set, and the minimum number of frequencies we can assign is the $T/N$-improper chromatic number.    
  
Given a graph $G=(V,E)$, we let $S_{t}=S_{t}(G)$ be the collection of all subsets of $V$ that are $t$-stable.  We shall determine the order of the largest
 member of $S_t$ in a random graph $G_{n,p}$. Recall that $G_{n,p}$ is a random graph on a set of $n$ vertices, which we assume to be $V_n:=\{1,\ldots
 ,n\}$, and each pair of distinct vertices is present as an edge with probability $p$ independently of every other pair of vertices.  Our interest is in dense random
 graphs, which means that we take $0<p<1$ to be a fixed constant.

We say that an event occurs \emph{asymptotically almost surely (a.a.s.)}~if it occurs with probability that tends to 1 as $n\to \infty$.

\subsection{Related background}

The $t$-stability number of $G_{n,p}$ for the case $t = 0$ has been studied thoroughly for both fixed $p$ and $p(n)=o(1)$.  Matula~\cite{Mat70, Mat72,
 Mat76} and, independently, Grimmett and McDiarmid~\cite{GrMc75} were the first to notice and then prove asymptotic concentration of the stability number
 using the first and second moment methods.  For $0 < p < 1$, define $b := 1/(1-p)$ and
\[ \atp{0} := 2\log_b n -2\log_b\log_b n +2\log_b(e/2)+1. \]
For fixed $0<p<1$, it was shown that for any $\eps > 0$ a.a.s.
\begin{align}
\lfloor \atp{0} - \eps \rfloor \le \alpha_0 (G_{n,p}) \le \lfloor \atp{0} + \eps \rfloor, \label{alpha0}
\end{align}
showing in particular that $\chi(G_{n,p}) \ge (1 - \eps) n/\atp{0}$.  
Assume now that $p=p(n)$ is bounded away from 1. 
Bollob\'as and Erd\H{o}s~\cite{BoEr76} extended~\eqref{alpha0} to hold with $p(n) > n^{-\delta}$ for any $\delta >0$.  
Much later, with the use of martingale techniques, Frieze~\cite{Fri90} showed that for any $\eps > 0$ there exists some 
constant $C_\eps$ such that if $p(n) \ge C_\eps/n$ then~\eqref{alpha0} holds a.a.s.

Efforts to determine the chromatic number of $G_{n,p}$ took place in parallel with the study of the stability number.  For fixed $p$, Grimmett and McDiarmid conjectured that $\chi(G_{n,p}) \sim n/\atp{0}$ a.a.s.  This conjecture was a major open problem in random
 graph theory for over a decade, until Bollob\'as~\cite{Bol88} and Matula and Ku\v{c}era~\cite{MaKu90} used martingales to establish the conjecture.  It was
 crucial for this work to obtain strong upper bounds on the probability of nonexistence in $G_{n,p}$ of a stable set with just slightly fewer than $\atp{0}$
 vertices.  \L{u}czak~\cite{Luc91a} fully extended the result to hold for sparse random graphs; that is, for the case $p(n) = o(1)$ and $p(n) \ge C/n$ for
 some large enough constant $C$.
Consult Bollob\'as~\cite{Bol01} or Janson, {\L}uczak and Ruci\'nski~\cite{JLR00} for a detailed survey of these as well as related results.  

For the case $t \ge 1$, the first results on the $t$-stability number were developed indirectly as a consequence of broader work on hereditary properties of
 random graphs.  A graph property --- that is, an infinite class of graphs closed under isomorphism --- is said to be \emph{hereditary} if every induced subgraph
 of every member of the class is also in the class.  For any given $t$, the class of graphs that are $t$-stable is an hereditary property.  As a result of study in
 this more general context, it was shown by Scheinerman~\cite{Sch92} that, for fixed $p$, there exist constants $c_{p,1}$ and $c_{p,2}$ such that
 $c_{p,1} \ln n \le \s \le c_{p,2} \ln n$ a.a.s.  This was further improved by Bollob\'as and Thomason~\cite{BoTh00} who characterised, for any fixed $p$, an
 explicit constant $c_p$ such that $(1-\eps)c_p \ln n \le \s \le (1+\eps)c_p \ln n$ a.a.s.  For any fixed hereditary property, not just $t$-stability, the constant
 $c_p$ depends upon the property but essentially the same result holds.  Recently, Kang and McDiarmid~\cite{KaMc07,KaMc10} considered $t$-stability separately, but
 also treated the situation in which $t = t(n)$ varies (i.e.~grows) in the order of the random graph.  They showed that, if $t = o(\ln n)$, then a.a.s.
 
\begin{align}
(1-\eps)2 \log_b n \le \s \le (1+\eps)2 \log_b n \label{alphatbasic}
\end{align}
(where $b = 1/(1-p)$, as above).  In particular, observe that the estimation~\eqref{alphatbasic} for $\s$ and the estimation~\eqref{alpha0} for
 $\alpha_0(G_{n,p})$ agree in their first-order terms.  This implies that as long as $t = o(\ln n)$ the $t$-improper and the ordinary chromatic numbers of
 $G_{n,p}$ have roughly the same asymptotic value a.a.s.

\subsection{The results of the present work}
In this paper, we restrict our attention to the case in which the edge probability $p$ and the non-negative integer parameter $t$ are fixed constants.  Restricted
 to this setting, our main theorem is an extension of~\eqref{alpha0} and a strengthening of~\eqref{alphatbasic}.
\begin{theorem}\label{1stability} Fix $0 < p < 1$ and $t \ge 0$.  Set $b:=1/(1-p)$ and 
\[
\atp{t} :=
 2\log_b n
 + (t-2)\log_b\log_b n
 + \log_b (t^t/t!^2)
 + t\log_b(2 b p/e)
 + 2\log_b (e/2)+1.
\]
Then for every $\eps > 0$ a.a.s.
\[ \lfloor \atp{t} - \eps \rfloor \le \s \le \lfloor \atp{t} + \eps \rfloor. \]
\end{theorem}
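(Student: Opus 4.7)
The plan is to apply the first- and second-moment methods to $X_k$, the number of $t$-stable $k$-subsets of $V_n$ in $G_{n,p}$. The analytic heart of the argument is a precise asymptotic for
\[
\ex X_k \;=\; \binom{n}{k}\sum_{G}\, p^{|E(G)|}(1-p)^{\binom{k}{2}-|E(G)|},
\]
where the sum ranges over all labeled graphs $G$ on $[k]$ with $\Delta(G)\le t$. Factoring out $(1-p)^{\binom{k}{2}}$ reduces each summand to $(pb)^{|E(G)|}$ with $b=1/(1-p)$, and since this weight is increasing in $|E(G)|$, I expect the dominant contribution to come from (near-)$t$-regular graphs. The key input will therefore be a precise count of such graphs on $k$ vertices, e.g.\ via Bollob\'as's configuration model, which combined with Stirling's formula should yield an expression of the shape
\[
\ex X_k \;\sim\; C_{t,p}\binom{n}{k}(1-p)^{\binom{k}{2}}\left(\frac{ktpb}{e}\right)^{\!kt/2}(t!)^{-k};
\]
the additive constants $\log_b(t^t/t!^2)$ and $t\log_b(2pb/e)$ appearing in $\atp{t}$ should then arise naturally from this formula, while $(1-p)^{\binom{k}{2}}\binom{n}{k}$ recovers the classical $t=0$ contribution responsible for the leading $2\log_b n - 2\log_b\log_b n$.

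With this asymptotic in hand, the upper bound reduces to a routine calculation: near $k=\atp{t}$ the quantity $\ex X_k$ is multiplied by a factor of $n^{-1+o(1)}$ when $k$ is incremented by~$1$, so setting $k_0 := \lfloor \atp{t}+\eps\rfloor+1$ yields $\ex X_{k_0}=o(1)$, and Markov's inequality gives $\is(G_{n,p}) \le k_0-1$ a.a.s.

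For the matching lower bound I would set $k_1 := \lfloor \atp{t}-\eps\rfloor$ and apply the Paley--Zygmund inequality; it suffices to show $\ex X_{k_1}^2 \le (1+o(1))(\ex X_{k_1})^2$. Decomposing
\[
\ex X_{k_1}^2 \;=\; \binom{n}{k_1}\sum_{i=0}^{k_1}\binom{k_1}{i}\binom{n-k_1}{k_1-i}\,q_i
\]
by the intersection size $i$ of two $k_1$-subsets, the terms with $i\le 1$ should match $(\ex X_{k_1})^2$ up to a factor $1+o(1)$, since then the two induced subgraphs share no edge. The main obstacle will be to control $q_i$ for $i\ge 2$: here the joint event that both sets are $t$-stable imposes coupled degree constraints on their common induced subgraph, so the enumeration from the first step must be refined to count pairs of (near-)$t$-regular graphs on $k_1$ vertices that share a prescribed induced subgraph on $i$ vertices, summed against the probability of that shared subgraph. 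A careful summation---exploiting a geometric decay factor of roughly $n^{-1+o(1)}$ per additional shared vertex, tempered by the modest boost from additional freedom in the shared edges---should then show that the terms $i\ge 2$ contribute $o((\ex X_{k_1})^2)$, completing the two-point concentration.
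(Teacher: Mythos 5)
Your overall architecture (first moment for the upper bound, second moment for the lower) is the same as the paper's, but there are two genuine gaps in the plan, both in places where the paper has to work hard.

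\textbf{The expectation.} The intuition that the sum is dominated by ``near-$t$-regular'' graphs is correct, but turning it into an upper bound on $\ex X_k$ (which is what Markov needs) is the technical heart of the paper and is not addressed by ``configuration model $+$ Stirling.'' Via the configuration model one reduces to estimating
$C_{2m}(t,k)=\sum_{\sum d_i=2m,\,d_i\le t}\prod_i 1/d_i!$
for all $m$ in the admissible range; the dominant $m$ turns out to satisfy $2m^*=tk-\Theta(\sqrt{k})$ (the deficit from $t$-regularity is $\Theta(\sqrt{k})$, not $O(1)$), and estimating $C_{2m}$ when $2m/k\to t$ is delicate because almost every $d_i$ is forced to equal $t$. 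The paper devotes an entire section to a saddle-point analysis of the generating function $\bigl(\sum_{i=0}^t z^i/i!\bigr)^k$ precisely to handle this regime, together with a lemma locating $m^*$. Relatedly, your claimed $\ex X_k\sim C_{t,p}(\cdots)$ with a genuine constant $C_{t,p}$ is too strong: the achievable accuracy is a $(1+o(1))^k$ multiplicative factor (and in fact the correction grows like $e^{\Theta(\sqrt{k})}$), which suffices for the theorem but is a weaker statement. Restricting to exactly $t$-regular graphs gives only a lower bound on $\ex X_k$; the first-moment upper bound needs more.

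\textbf{The second moment.} The claim of ``a geometric decay factor of roughly $n^{-1+o(1)}$ per additional shared vertex'' is false and would break the argument for large overlaps. Conditioning on a shared $\ell$-set being edge-free boosts the joint probability by a factor $b^{\binom{\ell}{2}}$, so the relevant summand behaves like $\bigl(k^2/(n-k)\bigr)^\ell b^{\binom{\ell}{2}}$, which is \emph{not} geometrically decaying: it decreases until $\ell\approx\log_b n$ and then increases again, and one must handle the regime $\ell$ close to $k$ by an entirely separate combinatorial argument (in the paper, by bounding directly the probability that each vertex of $B\setminus A$ has at most $t$ neighbours inside $A\cap B$). Moreover, the brute-force plan to enumerate pairs of near-$t$-regular graphs sharing a prescribed induced subgraph is both unnecessary and much harder than the paper's route: since $t$-stability is a monotone decreasing property, conditioning on the shared induced subgraph being empty only increases the conditional probability, giving
$\prob(A,B\in S_t)\le b^{\binom{\ell}{2}}\prob(A\in S_t)^2$
with no graph enumeration at all. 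Without this monotonicity observation and without the small-/large-overlap case split, the second-moment computation as proposed does not go through.
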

\noindent
We shall see that this theorem in fact holds if $\eps=\eps(n)$ as long as $\eps \gg \ln \ln n / \sqrt{\ln n}$.

We derive the upper bound with a first moment argument, which is presented in Section~\ref{1stMom}.  To apply the first moment method, we estimate the
 expected number of $t$-stable sets that have order $k$.  In particular, we show the following.
\begin{theorem} \label{Expectation} 
Fix $0 < p < 1$ and $t \ge 0$.  Let $\alpha_t^{(k)}(G)$ denote the number of $t$-stable sets of order $k$ that are contained in a graph $G$. 
If $k=O(\ln n)$ and $k\to \infty$ as $n\to \infty$, then 
\[ \ex (\sk) =
\left( 
e^2 n^2 b^{-k+1} k^{t-2} \left(\frac{t b p}{e}\right)^t \frac{1}{t!^2} \right)^{k/2} (1 + o(1))^k. \]
\end{theorem}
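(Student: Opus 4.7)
The plan is to start from the identity
\[
\ex(\sk) \;=\; \binom{n}{k}\, \prob\bigl(\Delta(G_{k,p}) \le t\bigr),
\]
obtained by linearity of expectation over the $\binom{n}{k}$ candidate $k$-subsets of $V_n$, and then to pull out the ``all non-edges'' baseline:
\[
\prob\bigl(\Delta(G_{k,p}) \le t\bigr) \;=\; (1-p)^{\binom{k}{2}}\, W_k,\qquad W_k := \sum_{H} \rho^{e(H)},
\]
where $\rho := p/(1-p) = bp$ and $H$ ranges over simple labeled graphs on $[k]$ with $\Delta(H) \le t$. The whole theorem then reduces to a precise asymptotic for $W_k$.

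To estimate $W_k$ I would sort graphs by their degree sequence and invoke the Bollob\'as configuration model. For any admissible sequence $(d_1, \dots, d_k)$ with each $d_i \le t$ and $\sum_i d_i = 2m$, the number of labeled simple graphs realising it is asymptotic to
\[
\frac{(2m)!}{m!\, 2^m \prod_i d_i!}\, \exp\!\bigl(-\tfrac{\lambda}{2} - \tfrac{\lambda^2}{4}\bigr)\,(1+o(1))
\]
as $k \to \infty$, where $\lambda := \sum_i d_i(d_i-1)/(2m) \in [0, t-1]$ is uniformly bounded, so the exponential correction is an absolute $O(1)$. Grouping vertices by the counts $k_d := |\{i : d_i = d\}|$ introduces a multinomial factor $k!/\prod_d k_d!$, and $W_k$ becomes a sum over the compositions $(k_0, \dots, k_t)$ of $k$.

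The core task is then to show that the $t$-regular term $k_t = k$ dominates. Comparing that term to a perturbation ``move $j$ vertices from degree $t$ to degree $d < t$'', a direct expansion gives a ratio of the form $C^j/j!$ for an absolute constant $C = C(t,p,d)$, so the sum of all such perturbations contributes only a bounded multiplicative factor (when $tk$ is odd the nearest regular sequence plays the role of the dominant term and yields the same leading behaviour). With $\lambda = t - 1$ in the regular case the configuration-model correction evaluates to $\exp(-(t^2-1)/4)$, giving
\[
W_k \;=\; \frac{(tk)!}{(tk/2)!\, 2^{tk/2}\, (t!)^k}\, \rho^{tk/2}\, (1+o(1))^k.
\]
Stirling's formula applied to $\binom{n}{k} \sim (en/k)^k/\sqrt{2\pi k}$, to $(1-p)^{\binom{k}{2}} = b^{-k(k-1)/2}$, and to $(tk)!/(tk/2)! \sim \sqrt{2}\,(2tk/e)^{tk/2}$ collects everything; after grouping and raising to the $k/2$-th power one obtains the stated expression, with all sub-exponential factors in $k$ absorbed into $(1+o(1))^k$.

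The hard part will be the third paragraph: one must both apply the configuration-model asymptotic uniformly across admissible degree sequences (routine for fixed $t$ since all relevant moments are bounded) and verify the geometric $C^j/j!$ tail on the perturbations, so that non-regular sequences really contribute only a bounded multiplicative factor rather than something growing with $k$. Everything else is Stirling bookkeeping.
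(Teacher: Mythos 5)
The overall plan -- reduce to $\ex(\sk) = \binom{n}{k}(1-p)^{\binom{k}{2}}W_k$ with $W_k = \sum_H \rho^{e(H)}$, group by degree sequence, and invoke the Bollob\'as asymptotic for the number of graphs with a given bounded degree sequence -- is sound and is in the same spirit as the paper (which writes the analogous sum over the number of edges $m$ using the quantity $C_{2m}(t,k)$). However, the central claim of your third paragraph is false, and it is exactly the point where the real difficulty of the problem lives.

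You assert that the ratio of the contribution with $j$ vertices of degree $d<t$ (and $k-j$ of degree $t$) to the $t$-regular contribution has the form $C^j/j!$ with $C$ an absolute constant, so that the sum of all perturbations is a \emph{bounded} multiplicative factor. Work it out for $d = t-1$: moving $j$ (even) vertices down to degree $t-1$ takes the edge count from $tk/2$ to $(tk-j)/2$, and the ratio is
\begin{align*}
\binom{k}{j}\,\rho^{-j/2}\,\frac{(tk-j-1)!!}{(tk-1)!!}\left(\frac{t!}{(t-1)!}\right)^j e^{O(1)}
\;\approx\; \frac{1}{j!}\left(\frac{k\,t}{\sqrt{\rho\,t k}}\right)^j
\;=\; \frac{1}{j!}\left(\sqrt{\frac{tk}{bp}}\,\right)^j.
\end{align*}
So the ``constant'' is in fact $\Theta(\sqrt{k})$, the sum over $j$ contributes $e^{\Theta(\sqrt k)}$ (not $O(1)$), and the maximizing $j$ is $\sqrt{tk/bp}\,(1+o(1))$. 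That is, $W_k$ is \emph{not} dominated by the $t$-regular sequence up to bounded factors: the dominant degree sequences have $\Theta(\sqrt{k})$ vertices of degree below $t$. This matches the paper's Lemma~\ref{mstar}, which shows the maximizing edge count is $2m^* = tk - \sqrt{tk/bp} + o(\sqrt{k})$, and is precisely why the paper formulates the saddle-point estimate (Theorem~\ref{coupons}) for the delicate window $t - \ln k/\sqrt k \le 2m/k \le t - 1/(\sqrt k \ln k)$, rather than simply assuming $2m/k$ is bounded away from $t$. The authors explicitly call out this difficulty in Section~\ref{DegSeq}. The $e^{\Theta(\sqrt k)}$ factor does get absorbed into $(1+o(1))^k$, so the final formula you write down is correct, but the argument you give for it is not: you cannot reach the conclusion by a Poisson-type tail bound with a $k$-independent base, and a correct version of your perturbation count would essentially have to redo the analysis that the paper carries out via generating functions and the saddle-point method. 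Until you confront the $\Theta(\sqrt k)$ growth of that ratio explicitly, the third paragraph is a genuine gap.
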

\noindent
(Note that by~\eqref{alphatbasic} the condition on $k$ is not very restrictive.)
Using this formula, we will see in Section~\ref{1stMom} that the expected number of $t$-stable sets with $\lfloor \atp{t} + \eps \rfloor+1$ vertices tends to
 zero as $n\to \infty$.   

The key to the calculation of this expected value is a precise formula for the number of degree sequences on $k$ vertices with a given number of edges and
 maximum degree at most $t$.  In Section~\ref{DegSeq}, we obtain this formula by the inversion formula of generating functions --- applied in our case to the generating function of
 degree sequences on $k$ vertices and maximum degree at most $t$. This formula is an integral of a complex function that is approximated with the use of  an
 analytic technique called saddle-point approximation. Our proof is inspired by the application of this method by Chv\'atal~\cite{Chv91} to a similar generating
 function.  For further examples of the use of the saddle-point method, consult Chapter~VIII of Flajolet and Sedgewick~\cite{FlSe09}.

The lower bound in Theorem~\ref{1stability} is derived with a second moment argument in Section~\ref{2ndMom}.

We remark that Theorems~\ref{1stability} and~\ref{Expectation} are both stated to hold for the case $t = 0$ (if we assume that $0^0 = 1$) in order to stress that these results generalise
 the previous results of Matula~\cite{Mat70, Mat72, Mat76} and Grimmett and McDiarmid~\cite{GrMc75}.  Our methods apply for this special case,
 however in our proofs our main concern will be to establish the results for $t \ge 1$.

In Section~\ref{chrom} we give a quite precise formula for the $t$-improper chromatic number of $G_{n,p}$.  For $t = 0$, that is, for the chromatic
 number, McDiarmid~\cite{McD90} gave a fairly tight estimate on $\chi(G_{n,p})(=\chi_0(G_{n,p}))$
proving that for any fixed $0< p < 1$ a.a.s.
\begin{align*}
\frac{n}{\atp{0} - 1 -o(1)} \le \chi_0(G_{n,p}) \le \frac{n}{\atp{0} -1 -\frac{1}{2} - \frac{1}{1-(1-p)^{1/2}}+o(1)}.
\end{align*}
Panagiotou and Steger~\cite{PaSt09} recently improved the lower bound showing that a.a.s. 
\begin{align*}
\chi_0(G_{n,p})  \ge \frac{n}{\atp{0} - \frac{2}{\ln b} -1 +o(1)},
\end{align*}
and asked if better upper or lower bounds could be developed.
In Section~\ref{chrom}, we improve upon McDiarmid's upper bound and we generalise (for $t \ge 1$) both this new bound and the lower bound of Panagiotou and Steger.
\begin{theorem} \label{tchrom}
Fix $0 < p < 1$ and $t\ge 0$.  Then a.a.s. 
\begin{align*}
\frac{n}{\atp{t} -\frac{2}{\ln b} -1 + o(1)} \le \chi_t(G_{n,p}) \le  \frac{n}{\atp{t} -\frac{2}{\ln b} - 2 - o(1)}.
\end{align*}
\end{theorem}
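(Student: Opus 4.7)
The plan is to handle the upper and lower bounds separately; both rely on the integral asymptotic
\[
\int^{n} \frac{dm}{2\log_b m + C} = \frac{n}{2\log_b n + C - 2/\ln b}\,(1+o(1)),
\]
which is a consequence of the classical expansion $\mathrm{li}(n) = n/(\ln n - 1)(1+o(1))$ of the logarithmic integral; this is the source of the $-2/\ln b$ term in both bounds.

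For the upper bound I would use an iterated greedy $t$-colouring. The preparatory step is to strengthen the second moment argument underlying the lower bound of Theorem~\ref{1stability} to a concentration statement: via Janson's inequality (or a vertex-exposure martingale), the probability that $G_{m,p}$ has no $t$-stable set of size $\alpha_{t,p}(m) - 2 + o(1)$ is at most $e^{-\omega(n)}$, uniformly for $m \geq n/\ln^3 n$. A union bound over the at most $2^n$ subsets $U \subseteq V_n$ with $|U| \geq n/\ln^3 n$ then guarantees a.a.s.\ that every such induced subgraph $G_{n,p}[U]$ contains a $t$-stable set of this size. Iteratively extracting and colouring these sets, and then assigning each of the at most $n/\ln^3 n$ remaining vertices its own colour (contributing only $o(n/\log n)$ extra colours), yields an upper bound of
\[
\int_{n/\ln^3 n}^{n} \frac{dm}{\alpha_{t,p}(m) - 2 + o(1)} + o(n/\log n) = \frac{n}{\atp{t} - 2 - 2/\ln b - o(1)}(1+o(1)).
\]

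For the lower bound I would adapt the counting argument of Panagiotou and Steger~\cite{PaSt09} to the $t$-improper setting. From Theorem~\ref{Expectation}, for each fixed $s \geq 0$, $\ex(\alpha_t^{(\atp{t} - s)}(G_{n,p})) = n^{s + o(1)}$; Markov's inequality then gives a.a.s.\ that the number of $t$-stable sets of size at least $\atp{t} - s$ is $n^{s + o(1)}$. For any $t$-improper colouring $V_n = S_1 \sqcup \cdots \sqcup S_k$, the number of colour classes with $|S_i| \geq \atp{t} - s$ is bounded above by the same count, since distinct classes correspond to distinct $t$-stable sets. Telescoping the size contributions across the relevant levels against the total $\sum_i |S_i| = n$ produces a lower bound on $k$; optimising $s$ (balancing the exponential growth rate $n^s$ of the count against the denominator $\atp{t} - s - 1$) yields the stated denominator $\atp{t} - 2/\ln b - 1 + o(1)$. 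The unit gap between the upper- and lower-bound denominators reflects that the extraction in the upper bound can only be guaranteed up to size $\alpha_{t,p}(m) - 2$ rather than $\alpha_{t,p}(m) - 1$.

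The principal technical obstacle is the concentration bound required for the upper bound: achieving failure probability $e^{-\omega(n)}$ for the existence of a near-maximum $t$-stable set, uniformly across exponentially many induced subgraphs of $G_{n,p}$, requires careful control of the correlation parameter in Janson's inequality over pairs of heavily intersecting near-maximum $t$-stable sets. A secondary but delicate difficulty in the lower bound is carrying out the telescoping so that the lower-order contributions to $\sum_i |S_i|$ do not erode the claimed gain of $2/\ln b + 1$.
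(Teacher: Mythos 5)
Your upper bound is essentially the paper's argument: show (via Janson's inequality, which is exactly what the paper's Lemma~\ref{ExpConc} provides) that with probability $1-e^{-n^{1+\Omega(1)}}$ every induced subgraph on at least $n/\ln^3 n$ vertices contains a $t$-stable set of size roughly $\atp{t}(|V'|)-1-\eps$, take a union bound over the $\le 2^n$ subsets, greedily peel off such sets, and convert the resulting Riemann sum into the logarithmic-integral asymptotic that produces the $-2/\ln b$ correction. Modulo the cosmetic difference between the target size $\lfloor\atp{t}(m)-1-\eps\rfloor$ (paper) and $\atp{t}(m)-2+o(1)$ (yours), this matches the paper.

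The lower bound, however, has a genuine gap. What you describe is the ``count the near-maximum $t$-stable sets by size and telescope'' argument: a.a.s.\ the number of $t$-stable sets of size $\ge\atp{t}-s$ is at most $n^{s+o(1)}$, hence at most that many colour classes can have size $\ge\atp{t}-s$, and one telescopes against $\sum_i|S_i|=n$. Carrying this out carefully (bounding the number of classes of size $\ge\ell$ by $\min\{k,\,n^{\atp{t}-\ell+o(1)}\}$, summing over $\ell$, and noting that the geometric tail beyond the crossover $\ell\approx\atp{t}-1$ contributes only $o(k)$) gives $\chi_t(G_{n,p})\ge n/(\atp{t}-1+o(1))$. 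This is the McDiarmid-type bound, and the constant cannot be improved by this route: the argument uses only that distinct colour classes are distinct $t$-stable sets, not that they are \emph{disjoint}, and the $2/\ln b$ gain is invisible to it. The Panagiotou--Steger argument that the paper generalises is a different computation: it bounds the \emph{expected number of unordered $r$-colourings} of $G_{n,p}$ into $t$-stable classes each of size at most $\atp{t}+1$. There the $2/\ln b$ emerges from the interplay between the $1/r!$ factor (which contributes $b^{-n/2+o(n)}$ since $r\sim n/(2\log_b n)$), the multinomial coefficient $\binom{n}{k_1\cdots k_r}$, and the product $\prod_i\prob(P_i\in S_t)$, optimised over balanced partitions (Lemma~\ref{MaxBal} in the paper). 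You would need to replace the telescoping step with this first-moment count of colourings; the ``optimising $s$'' you gesture at cannot produce the $2/\ln b$ term, and the difficulty you flag at the end (``lower-order contributions to $\sum_i|S_i|$ eroding the gain'') is not a bookkeeping nuisance but the structural reason the telescoping approach stops at $\atp{t}-1$.
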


Given a graph $G$, let the {\em colouring rate} $\overline{\alpha_0}(G)$ of $G$ be
$|V(G)| /\chi_0(G)$, which is the maximum average size of a colour class in a proper colouring of $G$.  Then the case $t=0$ of Theorem~\ref{tchrom} implies for any fixed $0 < p < 1$ that a.a.s.
\begin{align*}
\atp{0} -\frac{2}{\ln b} - 2 - o(1) \le \overline{\alpha_0} (G_{n,p}) \le \atp{0} -\frac{2}{\ln b} - 1 + o(1).
\end{align*}
Thus the colouring rate of $G_{n,p}$ is a.a.s.~contained in an explicit interval of length $1+o(1)$.  We remark that Shamir and Spencer~\cite{ShSp87} showed a.a.s.~$\tilde{O}(\sqrt{n})$-concentration of $\chi_0(G_{n,p})$ --- see also a recent improvement by Scott~\cite{Sco08+}.  (The $\tilde{O}$ notation ignores logarithmic factors.)  It therefore follows that $\overline{\alpha_0}(G_{n,p})$ is a.a.s.~$\tilde{O}( n^{-1/2})$-concentrated. 

The above discussion extends easily to $t$-improper colourings.


\section{Counting degree sequences of maximum degree $t$} \label{DegSeq}

Given non-negative integers $k, t$ with $t < k$, we let 
\[ C_{2 m}(t,k):=\sum_{(d_1,\ldots,d_k),\sum_i d_i =2 m,d_i\le t}\frac{1}{\prod_i d_i!}. \]
(Here, the $d_i$ are non-negative integers.)
Given a fixed degree sequence $(d_1,\ldots, d_k)$ with $\sum_i d_i = 2 m$, the number of graphs on $k$ vertices $(v_1,\ldots , v_k)$ where $v_i$ has
 degree $d_i$ is at most
\begin{align*} 
\frac{1}{\prod_i d_i!} \frac{(2 m)!}{m!2^m}.
\end{align*}
See for example~\cite{Bol01} in the proof of Theorem~2.16 or Section~9.1 in~\cite{JLR00} for the definition of the configuration model, from which the
 above claim follows easily.  Therefore, $C_{2 m}(t,k){(2 m)!/(m!2^m)}$ is an upper bound on the number of graphs with $k$ vertices and 
$m$edges such that each vertex has degree at most $t$. Note also that $(2 m)! C_{2 m}(t,k)$ is the number of allocations of $2m$ balls 
into $k$ bins with the property that no bin contains more than $t$ balls. 

In the proof of Theorem~\ref{Expectation}, we need good estimates for $C_{2m}(t,k)$, when $2m$ is close to $tk$. 
In particular, as we will see in the next section (Lemma~\ref{mstar}) we will need a tight estimate for $C_{2m} (t,k)$ when 
$t - \ln k/\sqrt{k} < 2m/k < t - 1/(\sqrt{k} \ln k)$, since in this range the expected number of $t$-stable sets having 
$m$ edges is maximised.
We require a careful and specific treatment of this estimation due to
the fact that $2m/k$ is not bounded below $t$.

For $t\ge 1$, note that $C_{2 m}(t,k)$ is the coefficient of $z^{2 m}$ in the following generating function: 
\[ G(z)=R_t(z)^{k}= \left(\sum_{i=0}^t \frac{z^i}{i!} \right)^{k}. \]
Cauchy's integral formula gives
\[ C_{2 m}(t,k)= \frac{1}{2\pi i} \int_{C} \frac{R_t(z)^k}{z^{2 m+1}} dz, \]
where the integration is taken over a closed contour containing the origin.

Before we state the main theorem of this section, we need the following lemma, which follows from Note~IV.46 in~\cite{FlSe09}.

\begin{lemma}\label{lemma:r_0,s}
Fix $t\ge 1$.  The function $r R_t'(r)/R_t(r)$ is strictly increasing in $r$ for $r > 0$.  For each $y \in (0, t)$, there exists a unique positive solution 
$r_0 = r_0(y)$ to the equation 
$r R_t'(r)/R_t(r)=y$ and furthermore the function $r_0(y)$ is a continuous bijection between $(0,t)$ and $(0,\infty)$.  Thus, if we set
\[ s(y)=r_0(y) \frac{d}{d x} \left. \frac{x R_t'(x)}{R_t(x)}\right|_{x=r_0(y)},\]
then $s(y) > 0$.
\end{lemma}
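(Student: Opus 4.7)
The plan is to give a short self-contained argument (following the spirit of Note IV.46 in~\cite{FlSe09}) based on a probabilistic interpretation of the ratio $h(r):=rR_t'(r)/R_t(r)$. Write
\[
 h(r) = \frac{\sum_{i=0}^{t} i\, r^i/i!}{\sum_{i=0}^{t} r^i/i!},
\]
which for $r>0$ is exactly the mean $\ex[X_r]$ of the random variable $X_r$ on $\{0,1,\ldots,t\}$ with probability mass function $\prob(X_r=i) = (r^i/i!)/R_t(r)$. Since $t\ge 1$ and $r>0$, all $t+1$ atoms carry positive probability, so $X_r$ is non-degenerate.

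The key computation is to show that $r h'(r)$ equals $\text{Var}(X_r)$, which is therefore strictly positive. Differentiating and regrouping,
\[
 r h'(r)
 = \frac{rR_t'(r)+r^2 R_t''(r)}{R_t(r)} - \left(\frac{rR_t'(r)}{R_t(r)}\right)^{\!2}.
\]
The identity $rR_t'(r)+r^2 R_t''(r)=\sum_{i=1}^{t}[i+i(i-1)]r^i/i!=\sum_{i=0}^{t} i^2 r^i/i!$ shows that the first term is $\ex[X_r^2]$, while the second term is $(\ex[X_r])^2$, confirming $rh'(r)=\text{Var}(X_r)>0$. Hence $h$ is strictly increasing and smooth on $(0,\infty)$.

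Next I would determine the range of $h$. As $r\to 0^+$, the numerator is dominated by $r$ and the denominator tends to $1$, so $h(r)\to 0$. As $r\to\infty$, both sums are dominated by the $i=t$ term, giving $h(r)\to (r^t/(t-1)!)/(r^t/t!)=t$. Combined with strict monotonicity and continuity, this shows that $h\colon(0,\infty)\to(0,t)$ is a continuous bijection. Consequently, for each $y\in(0,t)$ the equation $h(r)=y$ has a unique positive solution $r_0(y)$, and the inverse map $r_0\colon(0,t)\to(0,\infty)$ is itself a continuous bijection.

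Finally, $s(y)=r_0(y)\,h'(r_0(y))= \text{Var}(X_{r_0(y)})$ by the identity above, and this is strictly positive since $X_{r_0(y)}$ is non-degenerate. No real obstacle arises; the only subtlety is ensuring that the variance argument gives $r h'(r)>0$ cleanly, which is handled by the simple binomial-type identity $i^2=i+i(i-1)$.
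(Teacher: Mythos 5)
Your proof is correct and takes essentially the same approach the paper relies on: the paper offers no proof of its own but simply cites Note~IV.46 of Flajolet and Sedgewick, and the variance interpretation you carry out (viewing $rR_t'(r)/R_t(r)$ as the mean of a truncated-Poisson type variable, checking $r h'(r)=\mathrm{Var}(X_r)>0$, and pinning down the limits $0$ and $t$ at the endpoints) is exactly the standard argument that reference invokes; your write-up just makes it self-contained.
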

We will prove a ``large powers'' theorem to obtain a very tight estimate on $C_{2m}(t,k)$ when $2m/k$ is quite close to $t$.
A version of this theorem holds if we instead assume that $2m/k$ is bounded away from $t$; indeed, this immediately follows from Theorem~VIII.8 of~\cite{FlSe09}.  However, our version, where $2m/k$ approaches $t$, is necessary in light of Lemma~\ref{mstar} below.
\begin{theorem} \label{coupons}
Assume that $t\ge 1$ is fixed and $k \to \infty$.  Suppose that $m$ and $k$ are such that $t - \ln k/\sqrt{k} \le 2 m/k \le t - 1/(\sqrt{k}\ln k)$ for any $\eps > 0$, and $r_0$ and $s$ are defined as in 
Lemma~\ref{lemma:r_0,s}.  Then uniformly
\begin{align*} 
C_{2 m}(t,k)= \frac{1}{\sqrt{2\pi k s(2 m/k)}} \frac{R_t(r_0(2 m/k))^k}{r_0(2 m/k)^{2 m}}(1+o(1)).
\end{align*}
\end{theorem}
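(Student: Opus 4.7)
The plan is to evaluate the Cauchy integral from the statement by the saddle-point method, adapted to the borderline regime in which $2m/k\to t$. Write $y:=2m/k$, $r:=r_0(y)$, $s:=s(y)$ and parametrise $z=re^{i\theta}$; the integral becomes
\[
C_{2m}(t,k) \;=\; \frac{R_t(r)^k}{2\pi\,r^{2m}}\int_{-\pi}^{\pi}\!\exp\bigl(h(\theta)\bigr)\,d\theta,\qquad h(\theta):= k\ln\frac{R_t(re^{i\theta})}{R_t(r)} - 2mi\theta.
\]
The defining equation $rR_t'(r)/R_t(r)=y$ of $r_0(y)$ is precisely the saddle-point condition $h'(0)=0$, and a direct computation yields $h''(0)=-ks$. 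It therefore suffices to show that this integral is asymptotic to the Gaussian integral $\int_{-\infty}^{\infty}e^{-ks\theta^2/2}\,d\theta = \sqrt{2\pi/(ks)}$.

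I would split the integration range at $\theta_0:=k^{-1/5}$ (any exponent strictly between $1/6$ and $1/4$ works). On the central arc $|\theta|\le\theta_0$, Taylor expansion gives $h(\theta)=-\tfrac{1}{2}ks\,\theta^2 + E(\theta)$ with $|E(\theta)|\le \tfrac{|\theta|^3}{6}\max_{|\phi|\le\theta_0}|h'''(\phi)|$. Using the identity $R_t'=R_{t-1}$ and the expansion $R_t(z)=(z^t/t!)(1+t/z+O(1/z^2))$ valid uniformly on $|z|=r$, one checks $|h'''(\phi)|=O(k/r)$; since $r=r_0(y)\sim t/(t-y)$ as $y\to t$, this yields $|E(\theta_0)|=O(k(t-y)\theta_0^3)=o(1)$ under the hypothesis $t-y\le \ln k/\sqrt k$. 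Rescaling via $\theta=\phi/\sqrt{ks}$ then reduces the central contribution to the full Gaussian, giving $\sqrt{2\pi/(ks)}\,(1+o(1))$. For the tail $\theta_0<|\theta|\le\pi$, one isolates the leading two terms of $R_t(re^{i\theta})$ and bounds the remainder by the triangle inequality to derive the uniform estimate
\[
\left|\frac{R_t(re^{i\theta})}{R_t(r)}\right|^2 \;\le\; 1 - \frac{2t(1-\cos\theta)}{r} + O(1/r^2);
\]
raising to the $k$-th power and using $1/r\sim(t-y)/t$, the tail contributes at most $2\pi\exp\!\bigl(-c\,k(t-y)(1-\cos\theta_0)\bigr)$ for some $c>0$, which is negligible compared to $\sqrt{2\pi/(ks)}$ because $k(t-y)\theta_0^2\ge k^{1/10}/\ln k\to\infty$ in the prescribed range.

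The main obstacle, and the reason one cannot simply quote the standard large-powers theorem (e.g.\ Theorem VIII.8 of~\cite{FlSe09}), is the uniform control of errors as $y\to t$: the saddle point $r_0(y)$ escapes to infinity while the second derivative $s(y)$ collapses to zero, so the usual bounded-regime estimates break down and the naive inequality $|R_t(re^{i\theta})|\le R_t(r)$ is not sharp enough on the tail. The crucial inputs are the sharp asymptotics $1/r_0\sim(t-y)/t$ and $s\sim(t-y)$, which allow one to trade the vanishing prefactor $(t-y)$ against the large power $k$ in exactly the way the hypothesis $1/(\sqrt k\ln k)\le t-2m/k\le \ln k/\sqrt k$ prescribes; the two ends of this window are precisely what keep the cubic Taylor error small and the tail decay strong simultaneously.
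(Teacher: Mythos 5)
Your proposal is correct and follows essentially the same saddle-point argument as the paper's proof of Theorem~\ref{coupons}: the same Cauchy integral on a circle of radius $r_0$, the same split into a central arc (where the integrand is Gaussian to leading order) and a tail (where $|R_t(re^{i\theta})|$ is shown to decay), and the same key asymptotics $1/r_0 \sim (t-y)/t$, $s \sim t/r_0$ from Lemma~\ref{ytot} that make the estimates uniform as $y \to t$. The only differences are cosmetic: you use a fixed cutoff $\theta_0 = k^{-1/5}$ and expand the complex $h(\theta)$ directly, whereas the paper uses the adaptive cutoff $\delta = \ln k\sqrt{r_0/k}$ and controls the real part of $f^k$ via separate bounds on $\mathrm{Im}\,f/\mathrm{Re}\,f$ and on $\ln \mathrm{Re}\,f$; also, your tail bound $|R_t(re^{i\theta})/R_t(r)|^2 \le 1 - 2t(1-\cos\theta)/r + O(1/r^2)$ should be read with the observation that for $|\theta|\ge \theta_0$ the additive $O(1/r^2)$ is dominated by the main term (since $\theta_0^2 r \gg 1$), exactly as the paper folds this into a multiplicative $(1+o(1))$ factor.
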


In the proof of the theorem (as well as in later sections), we make frequent use of the following lemma, whose proof is postponed until the end of the section.
\begin{lemma}\label{ytot}
If $y=y(k) \to t$ as $k \to \infty$ (and $y<t$) and $r_0$ and $s$ are defined as in Lemma~\ref{lemma:r_0,s}, then
\begin{align}
&r_0
 = \frac{t}{t-y} + O(1),\label{r_asympt}
&\\
&\frac{d r_0}{d y}
 = \frac{{r_0}^2}{t}\left(1+O\left(\frac{1}{r_0}\right)\right), \text{ and} \label{derivative}
&\\
&s
 = \frac{t}{r_0}\left(1+O\left(\frac{1}{r_0}\right)\right). \label{lemma:s}&
\end{align}
\end{lemma}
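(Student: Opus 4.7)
The plan is to exploit Lemma~\ref{lemma:r_0,s}: since $r_0(y)$ is a continuous bijection from $(0,t)$ onto $(0,\infty)$, the hypothesis $y\to t$ forces $r_0\to\infty$, so all three estimates become statements about the large-$r$ behaviour of $R_t$. The strategy is to Laurent-expand $R_t(r)$ and $rR_t'(r)$ at infinity, divide to obtain an expansion of $h(r):=rR_t'(r)/R_t(r)$, and then read off the three conclusions.

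Concretely, from
\[
R_t(r)=\frac{r^t}{t!}\Bigl(1+\frac{t}{r}+O(r^{-2})\Bigr),\qquad rR_t'(r)=\frac{r^t}{(t-1)!}\Bigl(1+\frac{t-1}{r}+O(r^{-2})\Bigr),
\]
division yields $h(r)=t-t/r+O(r^{-2})$. Setting $h(r_0)=y$ gives $t-y=t/r_0+O(r_0^{-2})$, i.e.\ $r_0(t-y)=t+O(r_0^{-1})$, which rearranges (using $r_0\to\infty$) to $r_0=t/(t-y)+O(1)$, proving~\eqref{r_asympt}.

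For~\eqref{lemma:s} I would differentiate the Laurent expansion of $h$ termwise---legitimate since $h$ is a rational function of $r$ whose Laurent series at $\infty$ converges in a punctured neighbourhood of $\infty$---to obtain $h'(r)=t/r^2+O(r^{-3})$. Evaluating at $r_0$ gives
\[
s(y)=r_0\,h'(r_0)=\frac{t}{r_0}+O(r_0^{-2})=\frac{t}{r_0}\Bigl(1+O\bigl(r_0^{-1}\bigr)\Bigr),
\]
which is~\eqref{lemma:s}. Implicit differentiation of $h(r_0(y))=y$ then yields $dr_0/dy=1/h'(r_0)=r_0/s$, and combining with~\eqref{lemma:s} (using $(1+O(1/r_0))^{-1}=1+O(1/r_0)$) immediately produces~\eqref{derivative}.

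The only mild subtlety in this plan is the justification that the $O(r^{-2})$ error in $h$ differentiates to $O(r^{-3})$ in $h'$; but since $h$ is a fixed rational function of $r$ once $t$ is fixed, this is completely standard. So there is no real obstacle here---the lemma is essentially a bookkeeping exercise in large-$r$ asymptotics.
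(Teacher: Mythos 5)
Your proposal is correct and takes essentially the same approach as the paper: expand $R_t$ and $rR_t'$ at infinity, divide to get $h(r)=rR_t'(r)/R_t(r)=t-t/r+O(r^{-2})$, and then solve and differentiate. The only differences are presentational: you prove~\eqref{lemma:s} before~\eqref{derivative} (the paper does the reverse, obtaining $dr_0/dy$ first by implicitly differentiating $y/t=1-1/r_0+\eta(1/r_0)/r_0^2$ and then deriving $s$ via the chain rule), and you justify the differentiation by appealing directly to $h$ being rational rather than introducing the auxiliary power series $\eta$ --- a slightly tighter phrasing of the same analytic fact.
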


\begin{proof}{\bf of Theorem~\ref{coupons}}
The proof is inspired by~\cite{Chv91}.  
Throughout, we for convenience drop the subscript and write $R(z)$ in the place of $R_t(z)$.
Recall that $r_0 = r_0(2m/k)$ is the unique positive solution of the equation $rR'(r)/R(r) = 2m/k$, where 
$t- \ln k/\sqrt{k} \le 2m/k \leq t-1/(\sqrt{k} \ln k)$, and 
let $C$ be the circle of radius $r_0$ centred at the origin.  Using polar coordinates, we obtain
\begin{align*}
C_{2 m}(t,k)&=\frac{1}{2\pi i} \int_{C} \frac{R(r_0 e^{i\varphi})^k}
{{r_0}^{2 m+1}e^{i 2 m\varphi}e^{i\varphi}} d(r_0 e^{i\varphi}) 
= \frac{1}{2\pi {r_0}^{2 m}} \int_{-\pi}^{\pi} \frac{R(r_0 e^{i\varphi})^k}
{e^{i 2 m\varphi}} d \varphi.
\end{align*}
We let $\delta = \delta(k) := \ln k \sqrt{r_0/k}$ and write
\begin{align} \label{eqn:C}
C_{2 m}(t,k)=
\frac{1}{2\pi {r_0}^{2 m}} \left(
\int_{\delta}^{2\pi-\delta} \frac{R(r_0 e^{i\varphi})^k}{e^{i 2 m\varphi}} d \varphi +
\int_{-\delta}^{\delta} \frac{R(r_0 e^{i\varphi})^k}{e^{i 2 m\varphi}} d \varphi \right) .
\end{align}
Note that, since $2 m/k < t - 1/(\ln k \sqrt{k})$, it follows from~\eqref{r_asympt} that $\delta \to 0$ as $k \to \infty$.  We shall analyse the two integrals of~\eqref{eqn:C} separately.

To begin, we consider the first integral of~\eqref{eqn:C} and we wish to show that it makes a negligible contribution to the value of $C_{2 m}(t,k)$.  Note that
\begin{align}
\left|R(r_0 e^{i \varphi})\right|^2
&= \left( \sum_{j=0}^{t} \frac{{r_0}^{j}}{j!} \cos(j \varphi)
\right)^2 +
\left( \sum_{j=0}^{t} \frac{{r_0}^{j}}{j!} \sin(j \varphi)
\right)^2 \nonumber\\
&= \sum_{0 \le j_1,j_2 \le t} \frac{{r_0}^{j_1+j_2}}{j_1!j_2! }
\left(\cos (j_1 \varphi) \cos (j_2 \varphi)+
\sin (j_1 \varphi) \sin (j_2 \varphi) \right) \nonumber\\
&= \sum_{0 \le j_1,j_2 \le t} \frac{{r_0}^{j_1+j_2}}{j_1!j_2!}
\cos \left( (j_1-j_2) \varphi \right) \nonumber\\
&= R(r_0)^2
- \sum_{0 \le j_1 < j_2 \le t} \frac{2 {r_0}^{j_1+j_2}}{j_1!j_2!}
\left(1-\cos \left( (j_1-j_2) \varphi \right) \right). \label{eqn:R}
\end{align}

Note that $r_0\to \infty$ as $k \to \infty$. Hence, from~\eqref{eqn:R},
\begin{align*}
\left|R(r_0 e^{i \varphi})\right|^2
&\le R(r_0)^2 \left( 1 - \frac{\frac{2 {r_0}^{2 t-1}}{t!(t-1)!} (1 - \cos \varphi)}{\frac{{r_0}^{2 t}}{t!^2} + \Theta({r_0}^{2 t-1})} \right) 
= R(r_0)^2 \left( 1-(1+o(1))\frac{2 t}{r_0}( 1-\cos \varphi) \right).
\end{align*}
It follows that for $k$ large enough
\begin{align} \label{caseBfirst}
\left| \int_{\delta}^{2\pi-\delta} \frac{R(r_0 e^{i\varphi})^k}{e^{i 2 m\varphi}} d\varphi \right|
& \le 2\pi R(r_0)^k\left( 1-(1+o(1))\frac{2 t}{r_0}( 1-\cos \delta) \right)^{k/2} \nonumber\\
& \le 2\pi R(r_0)^k\exp \left( -\frac{t k}{2r_0}( 1-\cos \delta) \right)  \nonumber\\
& = 2\pi R(r_0)^k\exp \left(-\frac{t}{ 2} \cdot \frac{k\delta^2}{r_0 \ln k} \cdot \frac{1-\cos \delta}{\delta^2} \cdot \ln k \right).
\end{align}
Since $\delta \to 0$, we have that $(1-\cos \delta)/\delta^2 \to 1/2$.  By the choice of $\delta$, we also have that $k\delta^2/(r_0 \ln k) \to \infty$ as $k \to \infty$, and it follows from Inequality~\eqref{caseBfirst} that
\begin{align} \label{first.approx}
\left|\int_{\delta}^{2\pi-\delta} \frac{R(r_0 e^{i\varphi})^k}{e^{i 2 m\varphi}} d\varphi
\right| < R(r_0)^k/k,
\end{align}
for large enough $k$. 

In order to precisely estimate the second integral of~\eqref{eqn:C}, we consider the function $f: \mathbb{R} \to \mathbb{C}$ given by
\begin{align*} 
f(\varphi) &:= R(r_0 e^{i\varphi})\exp \left(-i\frac{2 m}{k} \varphi \right)
= \exp \left(-i\frac{2 m}{k} \varphi \right) 
\left( \sum_{j=0}^{t} \frac{{r_0}^j}{j!} (\cos(j \varphi) + i \sin(j \varphi) )\right).
\end{align*}
The importance of the function $f$ is that
\begin{align*}
\int_{-\delta}^{\delta} \frac{R(r_0e^{i\varphi})^k}{e^{i 2 m\varphi}} d \varphi = \int_{-\delta}^{\delta} f(\varphi)^k d \varphi.
\end{align*}
We will show that the real part of $f(\varphi)^k$ is well approximated by $R(r_0)^k\exp(-s k \varphi^2/2)$ when $|\varphi|$ is small --- see~\eqref{???} below.  The imaginary part can be ignored as the integral approximates a real quantity.

To this end we will apply Taylor's Theorem, 
and in order to do this we shall need the first, second and third derivatives of $f$ with respect to $\varphi$.  First,
\begin{align*}
f'(\varphi) = 
 \exp \left(-i\frac{2 m}{k} \varphi \right)
\left( \sum_{j=0}^{t} \frac{{r_0}^j}{j!} \left(\frac{2 m}{k}-j
 \right)(\sin (j \varphi) - i \cos (j \varphi ) )\right). 
\end{align*}
Note that
\begin{align*}
f'(0) &= -i
\left( \frac{2 m}{k}
\sum_{j=0}^{t} \frac{{r_0}^{j}}{j!} - 
\sum_{j=0}^{t} \frac{{r_0}^{j}}{j!}j  \right)
=-i \left( \frac{2 m}{k}R(r_0) - r_0R'(r_0) \right)= 0
\end{align*}
by the choice of $r_0$.
Next,
\begin{align*}
f''(\varphi) 
=& -i \frac{2 m}{k} f'(\varphi)
 +\exp \left(-i \frac{2 m}{k} \varphi \right)
\left( \sum_{j=0}^{t} \frac{{r_0}^j}{j!} \left(\frac{2 m}{k}-j
 \right)j(\cos (j \varphi )+i\sin (j \varphi) )\right).
\end{align*}
Therefore,
\begin{align}
f''(0)&=
-i\frac{2 m}{k}f'(0)
+\sum_{j=0}^{t} \frac{{r_0}^j}{j!} \left(\frac{2 m}{k}-j \right)j \nonumber \\
&= \frac{2 m}{k} \sum_{j=1}^{t} \frac{{r_0}^j}{j!}j - \sum_{j=1}^{t} \frac{{r_0}^j}{j!}j(j-1) - \sum_{j=1}^{t} \frac{{r_0}^j}{j!}j \nonumber \\
&= \left(\frac{r_0 R'(r_0)}{R(r_0)}\right) r_0 R'(r_0) - {r_0}^2 R''(r_0) - r_0 R'(r_0) \nonumber \\
&= -r_0 \left(\frac{-r_0 R'(r_0)^2}{R(r_0)} + r_0 R''(r_0) + R'(r_0) \right) \nonumber \\
&= -R(r_0) r_0 \left(\frac{(r_0 R''(r_0) + R'(r_0))R(r_0)-r_0 R'(r_0)^2}{R(r_0)^2}\right) \nonumber \\
&= 
-R(r_0)r_0 \frac{d}{d x} \left.\frac{xR'(x)}{R(x)}\right|_{x=r_0} 
= -R(r_0)s(2 m/k). \label{sec.par}
\end{align}
Thus, $f''(0) < 0$ by Lemma~\ref{lemma:r_0,s}.  Last, we have
\begin{align*}
f'''(\varphi) 
=& - i \frac{2 m}{k} f''(\varphi)
 - i \frac{2 m}{k} \exp \left(-i \frac{2 m}{k} \varphi \right)
\left( \sum_{j=0}^{t} \frac{{r_0}^j}{j!} \left(\frac{2 m}{k}-j
 \right)j(\cos (j \varphi )+i\sin (j \varphi) )\right)\\
& + \exp \left(-i \frac{2 m}{k} \varphi \right)
\left( \sum_{j=0}^{t} \frac{{r_0}^j}{j!} \left(\frac{2 m}{k}-j
 \right)j^2(-\sin (j \varphi )+i\cos (j \varphi) )\right).
\end{align*}

Since $r_0\rightarrow \infty$ as $k \rightarrow \infty$, there is a positive constant $a$ such that $a \le r_0$, for $k$ sufficiently large.
Clearly, $f(0) = R(r_0) > a^t/t! > 0$.  The continuity of $f$ on the compact set $-\pi \le \varphi \le \pi$ implies that there is a positive constant $\delta_0$ such that whenever $|\varphi| \le \delta_0$ we have $Re (f(\varphi)) > 0$. 
Since the first two derivatives of $Im (f(\varphi))$ with respect to $\varphi$ vanish when $\varphi =0$, and also $Im (f(0))=0$, Taylor's Theorem implies that 
\begin{align*}
|Im (f(\varphi))| \le  \sup_{|\varphi| \le \delta_0} |Im(f'''(\varphi))|\frac{\varphi^3}{6}
\end{align*}
if $|\varphi| \le \delta_0$.
Now, note that $Re (f (\varphi))$ and $Im(f'''(\varphi))$ can be considered as polynomials of degree $t$ with respect to $r_0$.  The leading term of $Re (f (\varphi))$ is
\begin{align*}
Re\left( \exp \left(-i\frac{2 m}{k} \varphi \right) (\cos (t \varphi) + i \sin (t \varphi)) \right) \frac{{r_0}^t}{t!};
\end{align*}
thus, 
$Re (f (\varphi)) = \Omega({r_0}^t)$.
On the other hand, using the derivative computations above and simplifying, it follows that the leading term of $Im(f'''(\varphi))$ is
\begin{align*}
&Im\left( \exp \left(-i\frac{2 m}{k} \varphi \right) (\sin (t \varphi) + i \cos (t \varphi)) \right) \left(t - \frac{2 m}{k} \right)^3 \frac{{r_0}^t}{t!}.
\end{align*}
By~\eqref{r_asympt}, $t - 2 m/k = (1 + o(1)) t / r_0$ and thus $Im(f'''(\varphi)) = O({r_0}^{t-1})$.
So, there exists $c_1 > 0$ such that for every $\varphi$ with $|\varphi|\le \delta_0$
\[ \frac{\sup_{|\varphi |\le \delta_0} |Im(f'''(\varphi))|}{|Re(f(\varphi))|} < \frac{c_1}{r_0}, \]
and therefore
\[\left| \frac{Im (f(\varphi))}{Re (f(\varphi))}\right| \le \frac{c_1
\varphi^3}{6 r_0}, \]
for any $\varphi$ with $|\varphi| \le \delta_0$. 
On the other hand, we have (see pages~15--16 of~\cite{Chv91} for the details) 
\[\left| \frac{Re(z^k)}{Re(z)^k}-1\right| \le \epsilon \left(k,
\left|\frac{Im (z)}{Re (z)} \right| \right), \]
with 
\[\epsilon(k,x)=(1+x)^k-1-x k \le e^{x k} - 1 \]
(for $x \ge 0$).  Since $\epsilon(k,x)$ increases in $x$ for $x\ge 0$, we have 
\begin{align} \label{eqn:f^k}
1- \epsilon \left(k,\frac{c_1 \delta^3}{6 r_0}\right) \le   
\frac{Re ( f(\varphi)^k)}{Re ( f(\varphi))^k}
\le 1+ \epsilon \left(k,\frac{c_1 \delta^3}{6 r_0}  \right),
\end{align}
whenever $|\varphi| \le \delta \le \delta_0$. 
 
Next, we approximate the function $\ln Re (f(\varphi))$.  First,
\begin{align*}
\left. \frac{d}{d \varphi} (\ln Re (f(\varphi))) \right|_{\varphi=0} = \left. \frac{Re (f'(\varphi))}{Re (f(\varphi))} \right|_{\varphi=0} = 0.
\end{align*}
Second, we have
\begin{align*}
\frac{d^2}{d \varphi^2} (\ln Re (f(\varphi)))
& = \frac{d}{d \varphi} \left( \frac{Re (f'(\varphi))}{Re (f(\varphi))} \right) 
= \frac{Re(f''(\varphi))Re(f(\varphi))-Re(f'(\varphi))^2}{Re(f(\varphi))^2};
\end{align*}
therefore, by Equation~\eqref{sec.par},
\begin{align*}
\left. \frac{d^2}{d \varphi^2} (\ln Re (f(\varphi))) \right|_{\varphi=0}
& = \frac{Re(f''(0))Re(f(0))-Re(f'(0))^2}{Re(f(0))^2}
= \frac{-R(r_0) s}{R(r_0)} = -s
\end{align*}
Now, the numerator of the third derivative with respect to $\varphi$ is 
\begin{align*}
& (Re(f''(\varphi))Re(f(\varphi))-Re(f'(\varphi))^2)'Re(f(\varphi))^2  \\
&- 2 Re(f(\varphi))(Re(f''(\varphi))Re(f(\varphi))-Re(f'(\varphi))^2) \\
&= Re (f(\varphi))\Big((Re(f''(\varphi))Re(f(\varphi))-Re(f'(\varphi))^2)'Re(f(\varphi)) \\
& - 2 (Re(f''(\varphi))Re(f(\varphi))-Re(f'(\varphi))^2) \Big). 
\end{align*}
Thus an elementary calculation gives that (for $|\varphi| \le \delta_0$)
\begin{align*}
&\frac{d^3}{d \varphi^3} (\ln Re(f(\varphi))) \\
&= \frac{Re(f'''(\varphi))Re(f(\varphi))^2-3 Re(f''(\varphi))Re(f'(\varphi))Re(f(\varphi))+2 Re(f'(\varphi))^3}{Re(f(\varphi))^3}.
\end{align*}
If, as we did earlier for $Re(f(\varphi))$ and $Im(f'''(\varphi))$, we consider $Re(f'(\varphi))$, $Re(f''(\varphi))$ and $Re(f'''(\varphi))$ as polynomials with respect to $r_0$, we can show that $Re(f'(\varphi)) = O({r_0}^{t-1})$, $Re(f''(\varphi)) = O({r_0}^{t-1})$ and $Re(f'''(\varphi)) = O({r_0}^{t-1})$.
It then follows that there exists $c_2 > 0$ such that for every $\varphi$ with $|\varphi| \le \delta_0$
\begin{align*}
\left|\frac{d^3}{d \varphi^3} (\ln Re (f(\varphi)))\right| \le \frac{c_2}{r_0}.
\end{align*} 
Therefore, Taylor's Theorem implies that for every $\varphi$ with $|\varphi| \le \delta_0$ we have 
\[\left| \ln Re(f(\varphi))- \left(\ln R(r_0) - \frac{s\varphi^2}{2}
\right)\right| \le 
\frac{c_2 \varphi^3}{6 r_0}.  \]
It follows that 
\[ \exp \left(- \frac{c_2 k \delta^3}{6 r_0}\right) \le 
\frac{Re (f(\varphi))^k}{R(r_0)^k \exp (-s k\varphi^2/2)}
\le \exp \left( \frac{c_2 k \delta^3}{6 r_0}\right).\]
The condition that $2 m/k < t - 1/(\ln k\sqrt{k})$ and~\eqref{r_asympt} together imply that $r_0 < t \ln k\sqrt{k} + O(1)$.  Therefore, $k\delta^3/r_0 = \sqrt{r_0/k}\ln^3 k \to 0$ as $k
\to \infty$, and we have 
\[ 
\exp \left( \frac{c_2 k \delta^3}{6 r_0}\right)=1+o(1)
\  \text{ and } \
\epsilon \left(k,\frac{c_1 \delta^3}{6 r_0}  \right)\le \exp\left(\frac{c_1 k\delta^3}{6 r_0} \right)-1= o(1),
\]
proving that 
\begin{align}
Re (f(\varphi)^k)= R(r_0)^k \exp (-s k\varphi^2/2) (1+o(1)) \label{???}
\end{align}
uniformly for $|\varphi| \le \delta$.
From~\eqref{eqn:C},~\eqref{first.approx} and~\eqref{???}, we obtain
\begin{align}\label{eqn:approx}
2\pi{r_0}^{2 m} C_{2 m}(t,k)
&= R(r_0)^k\left(\int_{-\delta}^{\delta} \exp (-s k\varphi^2/2) d
\varphi+o(1)\right).
\end{align} 
Using a change of variables $\psi =\sqrt{s k}\varphi$, observe that 
\[\int_{-\delta}^{\delta}  \exp \left(-\frac{s k\varphi^2}{2} \right) d
\varphi = \frac{1}{\sqrt{s k}} \int_{-\delta\sqrt{s k}}^{\delta\sqrt{s k}} \exp
\left(-\frac{\psi^2}{2} \right) d \psi = \sqrt{\frac{2\pi}{s k}}(1 + o(1)), \]
as $k \to \infty$ since $\delta \sqrt{s k} \sim \sqrt{t}\ln k \to \infty$.
Thus, Equation~\eqref{eqn:approx} becomes
\[
2\pi{r_0}^{2 m} C_{2 m}(t,k)
=\sqrt{ \frac{2\pi}{ks}} R(r_0)^k(1+o(1))
\]
and the result follows. 
\end{proof}

\subsection{Proof of Lemma~\ref{ytot}}

\begin{proof}{\bf of Equation~\eqref{r_asympt}}
First, note that $r_0 = r_0(y)\to \infty$ as $k\to \infty$ by Lemma~\ref{lemma:r_0,s}. So
\begin{align*} 
r_0 R'(r_0) & = \frac{{r_0}^t}{(t-1)!}\left(1+ \frac{t-1}{r_0} +
O\left(\frac{1}{{r_0}^2}\right) \right), \\
R(r_0) & = \frac{{r_0}^t}{t!}\left(1+\frac{t}{r_0} + O\left(\frac{1}{{r_0}^2} \right) \right).
\end{align*}
Thus, 
\begin{align} \label{eq:expand}
\frac{r_0 R'(r_0)}{R(r_0)} &= t\frac{1+\frac{t-1}{r_0} + O\left(\frac{1}{{r_0}^2} \right)}{1+\frac{t}{r_0} + O\left(\frac{1}{{r_0}^2} \right)} 
= t\left(1+\frac{t-1}{r_0} + O\left(\frac{1}{{r_0}^2} \right) \right)\left( 1-\frac{t}{r_0} + O\left(\frac{1}{{r_0}^2} \right) \right) 
\nonumber \\
&= t\left(1-\frac{t}{r_0}+\frac{t-1}{r_0} +  O\left(\frac{1}{{r_0}^2} \right)\right)
= t\left(1-\frac{1}{r_0}+ O\left(\frac{1}{{r_0}^2} \right) \right).
\end{align}

Since $r_0 R'(r_0)/R(r_0) = y = t(1-(t-y)/t)$, we obtain
\begin{align} \label{eq:Reltoy}
1 - \frac{t-y}{t} = 1 - \frac{1}{r_0} + O\left(\frac{1}{{r_0}^2} \right)
\end{align}
which can be rewritten as 
\[ r_0 = \frac{t}{t-y}\left(1 + O\left(\frac{1}{r_0} \right)\right), \]
and this implies the desired expression.
\end{proof}

\begin{proof}{\bf of Equation~\eqref{derivative}}
A more careful treatment of the computations for the proof of~\eqref{r_asympt} shows that the $O(1/{r_0}^2)$ error term in~\eqref{eq:Reltoy} may instead be written $\eta(1/r_0)/{r_0}^2$ where $\eta$ is a power series with positive radius of convergence.  
In particular, as $r_0R'(r_0)$ and $R(r_0)$ are polynomial functions of $r_0$, \eqref{eq:expand} yields, for some power series 
$\eta_1$, $\eta_2$ and $\hat{\eta}_2$ with positive radius of convergence, that
\begin{align*}
\frac{y}{t} &= \frac{r_0 R'(r_0)}{t R(r_0)}
= \frac{1+\frac{t-1}{r_0} + \frac{\eta_1(1/r_0)}{{r_0}^2}}{1+\frac{t}{r_0} + \frac{\eta_2(1/r_0)}{{r_0}^2}}
= \left( 1+\frac{t-1}{r_0} + \frac{\eta_1(1/r_0)}{{r_0}^2}\right) 
\left(1-\frac{t}{r_0} + \frac{\hat{\eta}_2(1/r_0)}{{r_0}^2} \right) \\
& = 1 - \frac{1}{r_0} + \eta\left(\frac{1}{r_0} \right)\frac{1}{{r_0}^2}.
\end{align*}
Then, by differentiating both sides of this expression with respect to $y$, we obtain 
\begin{align*}
\frac{1}{ t} = \frac{d }{ d r_0} \left(1 - \frac{1}{r_0} + \eta\left(\frac{1}{r_0} \right)\frac{1}{{r_0}^2} \right) \frac{d r_0}{d y}.
\end{align*}
We have that
\begin{align*}
\frac{d }{ d r_0} \left(1 - \frac{1}{r_0} + \eta\left(\frac{1}{r_0} \right)\frac{1}{{r_0}^2} \right)
& = \frac{1}{ {r_0}^2} - \eta\left(\frac{1}{r_0} \right)\frac{2}{{r_0}^3} - \eta'\left(\frac{1}{r_0} \right)\frac{1}{{r_0}^4}
= \frac{1}{ {r_0}^2} + O\left( \frac{1}{ {r_0}^3} \right)
\end{align*}
and~\eqref{derivative} immediately follows.
\end{proof}

\begin{proof}{\bf of Equation~\eqref{lemma:s}}
By the definition of $r_0$, it follows from the chain rule that
\[ 1=\frac{d}{d y}\frac{r_0R'(r_0)}{R(r_0)} = \frac{d}{d r_0} \frac{r_0R'(r_0)}{R(r_0)} \frac{d r_0}{d y}.\]
Thus, 
\begin{align*}
\left.\frac{d}{d x}\frac{x R'(x)}{R(x)}\right|_{x=r_0(y)} = \left(\left.\frac{d r_0(y')}{d y'}\right|_{y' = y} \right)^{-1},
\end{align*}
implying that
\begin{align*}
s(y) &= r_0(y)\left(\left.\frac{d r_0(y')}{d y'}\right|_{y'=y} \right)^{-1}
\stackrel{\mathtt{\eqref{derivative}}}{=} 
\frac{t}{r_0(y)} \left(1+O\left(\frac{1}{r_0(y)}\right)\right)
\end{align*}
as required.
\end{proof}


\section{The expected number of $t$-stable sets of order $k$ - proof of Theorem~\ref{Expectation}}\label{1stMom}

In this section, we give an asymptotic expression for the expected number of $t$-stable subsets of $V_n$ of order $k$ in $G_{n,p}$, proving
Theorem~\ref{Expectation}. In light of~\eqref{alphatbasic}, we will consider $k$ such that $k=k(n)=O(\ln n)$ and $k\to \infty$ as $n\to \infty$.
Towards the end of the section, we will specify $k$ and derive the upper bound of Theorem~\ref{1stability} by a first moment argument. 
  
Let $A$ be a subset of $V_n$ that has order $k$. If $\sk$ denotes the number of subsets of $V_n$ of order $k$ that are $t$-stable, then
\begin{align*} 
\ex (\sk) = \binom{n}{k} \prob (A \in S_t).  
\end{align*}
Partitioning according to the number of edges that $A$ induces, we have
\begin{align} \label{expansion} 
\prob (A \in S_t) = \sum_{m=0}^{\lfloor t k/2 \rfloor} \prob(A \in S_t, \; e(A)=m).
\end{align} 
By the definition of $C_{2 m}(t,k)$ (given at the beginning of Section~\ref{DegSeq}), it follows that
\begin{align} \label{EdgesExpansion}
\prob(A \in S_t, \; e(A)=m) \le p^m (1-p)^{\binom{k}{2} -m}
C_{2 m}(t,k)\frac{(2 m)!}{m! 2^m}=:f(m).
\end{align}
First, we find the value of $m$ for which the expression $f(m)$ on the right-hand side of~\eqref{EdgesExpansion} is maximised. 
If $m^*$ is such that $f(m^*) = \max \{f(m):  0 \le 2 m \le t k \}$, it turns out
that the following holds.
\begin{lemma} \label{mstar}
\( 2 m^* = t k -\sqrt{t k/b p}+o(\sqrt{k}). \)
\end{lemma}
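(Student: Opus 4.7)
}

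The natural approach is to treat $\log f(m)$ as a smooth function of $m$ (or, equivalently, of $y := 2m/k$) in the range supplied by Theorem~\ref{coupons}, locate its unique interior critical point, and check that this critical point genuinely achieves the maximum. Concretely, I would study the ratio $f(m+1)/f(m)$, which equals
\[
bp \cdot (2m+1) \cdot \frac{C_{2m+2}(t,k)}{C_{2m}(t,k)},
\]
and find the value of $m$ at which this ratio crosses $1$.

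The main calculation uses Theorem~\ref{coupons} together with the defining identity $r_0 R_t'(r_0)/R_t(r_0)=y$. Substituting the estimate of Theorem~\ref{coupons} and the standard Stirling asymptotic $(2m)!/(m!\,2^m)\sim\sqrt{2}\,(2m/e)^m$ into $f(m)$ yields
\[
\log f(m) = m\log(bp) + k\log R_t(r_0) - 2m\log r_0 + m\log(2m/e) + O(\log k),
\]
where $r_0=r_0(2m/k)$. Differentiating with respect to $m$ and using the crucial cancellation
\[
k\,\frac{R_t'(r_0)}{R_t(r_0)}\cdot\frac{dr_0}{dm} \;=\; \frac{ky}{r_0}\cdot\frac{dr_0}{dm} \;=\; \frac{2m}{r_0}\cdot\frac{dr_0}{dm}
\]
(so the $r_0'$ contributions coming from $R_t(r_0)^k$ and $r_0^{-2m}$ kill each other), one obtains
\[
\frac{d}{dm}\log f(m) \;=\; \log\!\left(\frac{2m\,bp}{r_0(2m/k)^{2}}\right) + o(1).
\]
The critical point condition is thus $r_0(2m^*/k)^2 = 2m^*\,bp\cdot(1+o(1))$.

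Solving this is where Lemma~\ref{ytot}\eqref{r_asympt} enters. Writing $y^* = 2m^*/k$ and inserting $r_0=t/(t-y^*)+O(1)$ gives
\[
\frac{t^{2}}{(t-y^*)^{2}}\bigl(1+o(1)\bigr) \;=\; k y^*\,bp,
\]
and since the critical $y^*$ satisfies $y^*\to t$, we get $(t-y^*)^2 \sim t/(kbp)$, hence $t - y^* = \sqrt{t/(kbp)}\bigl(1+o(1)\bigr)$. Multiplying by $k$ yields $2m^* = tk - \sqrt{tk/(bp)} + o(\sqrt{k})$, as required. A posteriori $2m^*/k$ does lie in the interval $[t-\ln k/\sqrt k,\,t-1/(\sqrt k\ln k)]$ to which Theorem~\ref{coupons} applies, so the use of that theorem above was legitimate.

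The main obstacle — and the step that needs the most care in the writeup — is ruling out that the true maximiser lies outside this central interval. For $m$ with $2m/k$ bounded away from $t$, the standard large-powers saddle-point bound (e.g.\ Theorem VIII.8 of Flajolet--Sedgewick) keeps $r_0$ bounded, so $r_0^2 \ll 2m\,bp$ and the log-derivative computed above is strictly positive, forcing $f$ to be increasing throughout this regime. For the thin boundary range $2m/k \ge t - 1/(\sqrt k\ln k)$, one can combine the trivial bound $C_{2m}(t,k) \le C_{tk/2}(t,k)$ (or a direct estimate using $R_t(r_0)^k/r_0^{2m}$ with $r_0$ chosen as in Theorem~\ref{coupons} at the boundary) with the log-concavity of $f$ suggested by the monotonicity of $r_0(y)/\sqrt{y}$, to show $f(m) < f(m^*)$. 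Together these two boundary arguments confine $m^*$ to the central range and complete the proof.
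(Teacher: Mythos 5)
Your computation of the critical point in the central window is in essence the paper's Case~\ref{3rdcase} in differential form: the paper computes the discrete ratio $\la_m=f(m+1)/f(m)$ via Theorem~\ref{coupons} and Lemma~\ref{ytot} and solves $\la_m\approx 1$, while you differentiate $\log f(m)$; both roads lead to the same equation $r_0(2m^*/k)^2 \sim 2m^*\,bp$ and the same answer, and the cancellation of the $dr_0/dm$ terms using $r_0R'_t(r_0)/R_t(r_0)=2m/k$ is exactly the mechanism in play. So the heart of the argument is correct and matches.

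Where your plan has genuine gaps is precisely where you flag the ``main obstacle'': ruling out maximisers outside the central window. Two problems. First, your lower-side argument only covers $2m/k$ \emph{bounded away} from $t$ (so that the classical Theorem~VIII.8 with $r_0=O(1)$ applies), but this leaves untouched the intermediate regime where $2m/k\to t$ yet $2m/k<t-\ln k/\sqrt k$; there Theorem~\ref{coupons} is also unavailable, and you would need a separate estimate for $C_{2m}(t,k)$. Second, your upper-side argument is not sound as stated: the inequality $C_{2m}(t,k)\le C_{tk/2}(t,k)$ is not an established (or even clearly meaningful) bound, and the ``log-concavity suggested by monotonicity of $r_0(y)/\sqrt y$'' is a heuristic, not a proof; moreover in this regime Theorem~\ref{coupons} is again out of range, so you cannot simply evaluate $R_t(r_0)^k/r_0^{2m}$ ``at the boundary''. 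The paper sidesteps the need for any saddle-point estimate outside the central range by a direct combinatorial argument: writing $S_{2m}(t,k)=(2m)!\,C_{2m}(t,k)$ as a count of allocations of labelled balls into capacity-$t$ bins, it builds a bipartite graph between ${\mathcal S}_{2m}$ and ${\mathcal S}_{2m+2}$ and bounds the degree of each configuration to get $S_{2m+2}/S_{2m}\ge k\ln^2k/(2t^2)$ when $2m<tk-\sqrt k\ln k$ (hence $\la_m\gg1$), and $S_{2m+2}/S_{2m}\le k/\ln^2k$ when $2m>tk-\sqrt k/\ln k$ (hence $\la_m=o(1)$). That elementary estimate is what your plan is missing; you should replace the vague log-concavity/trivial-bound step by a bound of this kind, and extend the lower-side argument to cover the full range $2m<tk-\sqrt k\ln k$ rather than only $2m/k$ bounded away from $t$.
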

\begin{proof}
Let $\la_m = \la_m (t,k) =f(m+1)/f(m)$. Thus, 
\begin{align*}
\la_m &= \frac{p}{1-p}\frac{C_{2 m+2}(t,k)}{C_{2 m}(t,k)}\frac{1}{2}
\frac{(2 m+2)(2 m+1)}{m+1}=\frac{p}{1-p}\frac{C_{2 m+2}(t,k)}{C_{2 m}(t,k)}(2 m+1).
\end{align*}
We will estimate $\la_m$ for all $m$ with $0 \le 2 m \le t k$ and treat three separate cases:
\begin{enumerate}
\item\label{1stcase} $2 m < t k - \sqrt{k} \ln k$;
\item\label{2ndcase} $2 m > t k - \sqrt{k} / \ln k$; and
\item\label{3rdcase} $t k - \sqrt{k} \ln k \le 2 m \le t k - \sqrt{k} / \ln k$.
\end{enumerate}
We will use Theorem~\ref{coupons} in Case~\ref{3rdcase}, as we will determine those values $m$ for which $\la_m \approx 1$ within that range.  
In the other cases we will use a cruder argument, which is nonetheless sufficient for our purposes. 

\subsubsection*{Case~\ref{1stcase}} 

We will show that $\la_m > 1$ for any such $m$. 
We set $S_{2 m}(t,k)=(2 m)!C_{2 m}(t,k)$. Note that this is equal to the number of ways of allocating $2 m$ labelled balls into $k$ bins so that each bin does not receive more than $t$ balls --- we also denote the set of such allocations by ${\mathcal S}_{2 m}(t,k)$. We have 
\begin{align} \label{eq:CoeffRatio}
\frac{C_{2(m+1)}(t,k)}{C_{2 m} (t,k)} = \frac{S_{2(m+1)}(t,k)}{S_{2 m} (t,k)}\frac{1}{(2 m+2)(2 m+1)}. 
\end{align}
We will obtain a lower bound on the left-hand side, by first obtaining a lower bound on the ratio $S_{2(m+1)}(t,k)/S_{2 m} (t,k)$. 
Let us consider $2 m+2$ distinct balls which we label $1,\ldots, 2 m+1, 2 m+2$.  We construct an auxiliary bipartite graph whose parts are 
${\mathcal S}_{2 m}(t,k)$ and ${\mathcal S}_{2 m+2}(t,k)$. If $c\in {\mathcal S}_{2 m}(t,k)$ and $c' \in {\mathcal S}_{2 m+2}(t,k)$, 
then $(c,c')$ forms an edge in the auxiliary graph if $c'$ restricted to balls $1,\ldots, 2 m$ is $c$.  So any $c' \in {\mathcal S}_{2 m+2}(t,k)$ is adjacent 
to exactly one configuration $c \in {\mathcal S}_{2 m}(t,k)$, that is, its degree in the auxiliary graph is equal to 1.  
Also, if $e(c)$ is the number of non-full bins in a configuration $c \in {\mathcal S}_{2 m}(t,k)$, then $c$ has at least $e(c)(e(c) - 1)$ neighbours in 
${\mathcal S}_{2 m+2} (t,k)$. This is the case since there are at least $e(c)(e(c) - 1)$ ways of allocating balls $2 m+1$ and $2 m+2$ into the 
non-full bins of $c$,
therefore giving a lower bound on the number of configurations in ${\mathcal S}_{2 m+2} (t,k)$ whose restriction on the first $2 m$ balls is $c$. But $2 m < t k - \sqrt{k} \ln k$ and therefore $e(c)\ge \sqrt{k} (\ln k) / t$.  These observations imply that for $k$ large 
enough
\[ S_{2 m+2} (t,k) \ge \frac{k \ln^2 k}{2 t^2} S_{2 m}(t,k), \]
and therefore 
\begin{align*} 
\frac{C_{2(m+1)}(t,k)}{C_{2 m} (t,k)} &= \frac{S_{2(m+1)}(t,k)}{S_{2 m} (t,k)}\frac{1}{(2 m+2)(2 m+1)}\ge 
\frac{k \ln^2 k}{2(2 m+2)(2 m+1)} = \Omega\left(\frac{\ln^2 k}{ m} \right).   
\end{align*}
So $\la_m = \Omega (\ln^2 k) > 1$ in Case~\ref{1stcase}.
\medskip

\subsubsection*{Case~\ref{2ndcase}}
We treat this case similarly. We consider an auxiliary bipartite graph as above. 
Let  $c \in {\mathcal S}_{2 m}(t,k)$ be a configuration of balls $1,\ldots, 2 m$. 
Since there are at most $\sqrt{k}/\ln k$ places available in the non-full bins,  
there are at most $k/\ln^2 k$ ways of allocating balls $2 m+1$ and $2 m+2$
into the non-full bins of $c$. In other words, the degree of any vertex in ${\mathcal S}_{2 m}(t,k)$ is at most $k/\ln^2 k$. 
Also, as above, the degree of any vertex/configuration  $c' \in {\mathcal S}_{2 m+2}(t,k)$ is equal to one. Therefore, 
\[ \frac{S_{2 m+2}(t,k) }{ S_{2 m}(t,k)} \le  \frac{k}{ \ln^2 k}. \]
Substituting this into~\eqref{eq:CoeffRatio}, we obtain
\[ \frac{C_{2(m+1)}(t,k) }{ C_{2 m} (t,k)} \le  \frac{k }{ \ln^2 k}\frac{1}{ (2 m+2)(2 m+1)}.\]
Therefore, in Case~\ref{2ndcase} we have
\begin{align*}
\la_m = O\left(\frac{k }{ m \ln^2 k}\right) = O\left(\frac{1}{ \ln^2 k}\right) =o(1).
\end{align*}

\subsubsection*{Case~\ref{3rdcase}}

In this range, we need more accurate estimates, as we will identify those $m$ for which $\la_m$ is approximately equal to 1. 
We appeal to Theorem~\ref{coupons} for asymptotic estimates of $C_{2 m}(t,k)$ and $C_{2 m+2}(t,k)$ and write $\la_m = (1 + o(1)) {\tilde \la}_m$ where
\begin{align} \label{ratio}
{\tilde \la}_m &=  \frac{p}{1-p}\left(\frac{s(2 m/k)}{s(2(m+1)/k)}\right)^{1/2}
\left(\frac{R(r_0(2(m+1)/k))}{R(r_0(2 m/k))} \right)^k \frac{r_0(2 m/k)^{2 m}}{r_0(2(m+1)/k)^{2 m+2}}(2 m+1). \nonumber \\
& 
\end{align}

Writing $2 m=t k - x k$, we have  $x = o(1)$. 
So, by~\eqref{r_asympt} and~\eqref{derivative}, uniformly for every $z \in [t-x, t-x+2/k]$, 
we have 
\[
\left. \frac{d r_0}{d y}\right|_{y=z} = \frac{t}{x^2}(1+o(1));
\]
thus, the Mean Value Theorem yields 
\begin{align}
r_0(2(m+1)/k)
& = r_0(2 m/k) + \frac{2t}{x^2 k}(1+o(1)) 
 \stackrel{\text{\eqref{r_asympt}}}{=} r_0(2 m/k)\left(1 + \frac{2}{x k}(1+o(1))\right).  \label{perturb1}
\end{align}
So, since $x k \to \infty$ as $k \to \infty$, Equation~\eqref{perturb1} and~\eqref{lemma:s} yield
\begin{align} \label{1st_rat}
\left(\frac{s(2 m/k)}{s(2(m+1)/k)}\right)^{1/2} = 1+o(1).
\end{align}

To estimate the third ratio of~\eqref{ratio}, we write $r_0(2(m+1)/k) = r_0(2 m/k)(1 + \eta)$ where $\eta = (2/x k)(1+o(1))$ by~\eqref{perturb1}.  We also write
\begin{align*}
R(r_0(2(m+1)/k)
& = \frac{{r_0}^t(2(m+1)/k)}{t!} \sum_{t=0}^t \frac{t!}{(t-\ell)!} \frac{1}{{r_0}^{\ell}(2(m+1)/k)}.
\end{align*}
Note that
\begin{align*}
\sum_{t=0}^t &\frac{t!}{(t-\ell)!} \frac{1}{{r_0}^{\ell}(2(m+1)/k)}
 = \sum_{t=0}^t \frac{t!}{(t-\ell)!} \frac{(1 + \eta)^{-\ell}}{{r_0}^{\ell}(2 m/k)}
 = \sum_{t=0}^t \frac{t!}{(t-\ell)!} \frac{1 - \ell \eta(1 + O(\eta))}{{r_0}^{\ell}(2 m/k)} \\
& = 1 + \frac{t}{r_0(2 m/k)} (1 - \eta) + \frac{t(t-1)}{{r_0}^2(2 m/k)} + O\left( \frac{\eta^2}{r_0(2 m/k)} +  \frac{\eta}{{r_0}^2(2 m/k)} + \frac{1}{{r_0}^3(2 m/k)}\right).
\end{align*}
Since this last big-O term is $o(1/k)$, it follows that
\begin{align*}
\frac{R(r_0(2(m+1)/k)}{r_0(2(m+1)/k)^t}
& = \frac{1}{t!} \left(1 + \frac{t}{r_0(2 m/k)} (1 - \eta) + \frac{t(t-1)}{{r_0}^2(2 m/k)} + o(1/k)\right)
\end{align*}
and similar calculations show that
\begin{align*}
\frac{R(r_0(2 m/k)}{r_0(2 m/k)^t}
& = \frac{1}{t!} \left(1 + \frac{t}{r_0(2 m/k)} + \frac{t(t-1)}{{r_0}^2(2 m/k)} + o(1/k)\right).
\end{align*}
So the third ratio in~\eqref{ratio} becomes
\begin{align}
\left(\frac{R(r_0(2(m+1)/k))}{R(r_0(2 m/k))} \right)^k
& = \left(\frac{r_0(2(m+1)/k)}{r_0(2 m/k)} \right)^{t k}
\left( 1 - \frac{t \eta}{r_0(2 m/k)} + o(1/k) \right)^k \nonumber\\
& = \left(\frac{r_0(2(m+1)/k)}{r_0(2 m/k)} \right)^{t k}
e^{-2} (1 + o(1)) \label{rat2}
\end{align}
where the last equality holds by the fact that
\begin{align*}
\frac{t \eta k}{r_0(2 m/k)} = \frac{t (2/x k) k}{t/x} (1 + o(1)) = 2(1 + o(1)).
\end{align*}
Since $x k\to \infty$, we have by~\eqref{perturb1} and~\eqref{r_asympt} that $r_0(2(m+1)/k)=r_0(2 m/k)(1+o(1)) = (1+o(1))t/x$.  
So using~\eqref{perturb1} and~\eqref{rat2} we can write the product of the third and the fourth terms in~\eqref{ratio} as follows:
\begin{align*}
\left(\frac{R(r_0(2(m+1)/k))}{R(r_0(2 m/k))} \right)^k 
&\frac{{r_0}^{2 m}(2 m/k)}{{r_0}^{2 m+2}(2(m+1)/k)} \\
& = e^{-2} \left(\frac{r_0(2(m+1)/k)}{r_0(2 m/k)} \right)^{t k - 2 m} \frac{1 + o(1)}{{r_0}^2(2(m+1)/k)}
\\
& = e^{-2} 
\left(1 + \frac{2}{x k}(1+o(1))\right)^{x k}
\frac{x^2}{t^2}(1+o(1))
 \stackrel{x k \to \infty}{=} \frac{x^2}{t^2}(1+o(1)). 
\end{align*}
If $x\ge \omega(k)/\sqrt{k}$, where $\omega(k)\to \infty$, then 
substituting this last equation and~\eqref{1st_rat} into~\eqref{ratio} and recalling that $\la_m = (1 + o(1)){\tilde \la}_m$, we obtain
\begin{align*} 
\lambda_m = 
\Omega(1)\frac{x^2}{t^2} (2 m+1) = \Omega\left(\frac{\omega(k)^2 m}{k} \right) = \Omega(\omega(k)^2) \to \infty.
\end{align*}
If $x\le 1/(\omega(k) \sqrt{k})$, then these substitutions yield
\[ \lambda_m = O(1) \frac{x^2}{t^2} (2 m+1) = O\left( \frac{m}{\omega(k)^2 k} \right)= O\left(\frac{1}{\omega(k)^2}\right) = o(1). \]
Assume now that $x=\alpha/\sqrt{k}$, for some $\alpha = \Theta(1)$. In this case, 
\[ \lambda_m = \frac{p}{1-p}\frac{\alpha^2}{t^2 k}(t k -x k + 1) (1+o(1)) = \frac{p}{1-p}\frac{\alpha^2}{t}(1+o(1))\stackrel{b=1/(1-p)}{=} 
\frac{b p \alpha^2}{t}(1+o(1)). \]
Thus for any fixed $\alpha' < \sqrt{t / b p}<\alpha'' $ and for $k$ large enough we have  $tk - \alpha'' \sqrt{k} \leq  2 m^* \leq   t k - \alpha'\sqrt{k}$. 
Putting all these different cases together, we deduce that, if $m^*$ is such that 
$f(m^*)$ is maximised over the set $0 \le 2 m \le t k$, then  
\( 2 m^* = t k  - \sqrt{t k/b p} +o(\sqrt{k}).\)
This concludes the proof of Lemma~\ref{mstar}.
\end{proof}

Before we proceed to the proof of Theorem~\ref{Expectation}, let us use Lemma~\ref{mstar} to compute a precise asymptotic expression for $f(m^*)$.  Recall that $b=1/(1-p)$ and observe that
\begin{align}
p^{m^*} (1-p)^{\binom{k}{2} - m^*}
& = b^{-\binom{k}{2}} (b p)^{t k/2 - \sqrt{t k/b p} + o(\sqrt{k})}  = b^{-\binom{k}{2}} (b p)^{t k/2} \left(1 + O\left(\frac{1}{\sqrt{k}}\right)\right)^k.
\label{pmstar}
\end{align}

For the second part of the expression for $f(m^*)$, note that, by Theorem~\ref{coupons},
\begin{align} \label{C_expr}
C_{2 m^*}(t,k) = 
\frac{1}{\sqrt{2\pi s(2 m^*/k)}}\frac{R(r_0(2 m^*/k))^k}{r_0(2 m^*/k)^{2 m^*}} (1+o(1)).
\end{align}
By~\eqref{r_asympt}, we have 
\begin{align*} 
r_0(2 m^*/k) = \sqrt{t b p k} + o(\sqrt{k}). 
\end{align*}
Thus, by~\eqref{lemma:s}, $s(2 m^*/k)=\Theta (1/\sqrt{k})$.
Now, it follows that 
\begin{align*}
R(r_0(2 m^*/k)) 
& = \frac{{r_0}^t(2 m^*/k)}{t!} \sum_{\ell=0}^{t} \frac{t!}{(t-\ell)!} \frac{1}{{r_0}^\ell(2 m^*/k)} \\
& = \frac{{r_0}^t(2 m^*/k)}{t!} \left( 1 + \frac{t}{r_0(2 m^*/k)} + O\left(\frac{1}{{r_0}^2(2 m^*/k)}\right) \right) \\
& = \frac{{r_0}^t(2 m^*/k)}{t!} \left( 1 + \sqrt{\frac{t}{b p k}} + o\left(\frac{1}{\sqrt{k}}\right) \right);
\end{align*}
therefore,
\begin{align*}
R(r_0(2 m^*/k))^k
& = \frac{(r_0(2 m/k))^{t k}}{t!^k} e^{\sqrt{t k/b p}+o(\sqrt{k})}.
\end{align*}
Substituting this into~\eqref{C_expr}, we obtain
\begin{align}
C_{2 m^*}(t,k) &= \Theta(k^{1/4}) \frac{(r_0(2 m^*/k))^{t k - 2 m^*}}{t!^k} e^{\sqrt{t k/b p}+o(\sqrt{k})}
\nonumber \\
& = \Theta(k^{1/4}) \left(\sqrt{t b p k} + o(\sqrt{k}) \right)^{\sqrt{t k/b p} + o(\sqrt{k})} e^{\sqrt{t k/b p}+ o(\sqrt{k})} \frac{1}{t!^k}
\nonumber\\
& = \frac{1}{t!^k} \left(1 + O\left(\frac{\ln k}{\sqrt{k}}\right)\right)^k. \label{Coef}
\end{align}

For the last part of the expression for $f(m^*)$, we apply Stirling's formula to obtain
\begin{align}
\frac{(2 m^*)!}{m^*! 2^{m^*}}
& = \frac{(2 m^*/e)^{2 m^*} \sqrt{2 \pi (2 m^*)} e^{o(1)}}{(m^*/e)^{m^*} \sqrt{2 \pi m^*} e^{o(1)}} \frac{1}{2^{m^*}} 
 = \Theta(1)  \left(\frac{2 m^*}{e}\right)^{m^*} \nonumber \\
& = \Theta(1) \left(\frac{t k-\sqrt{t k/b p}+o(\sqrt{k})}{e} \right)^{t k/2 - \sqrt{t k/b p}/2 +o(\sqrt{k})} \nonumber\\
& = \left(\frac{t k}{e}\right)^{t k/2} \left(1 + O\left(\frac{\ln k}{\sqrt{k}}\right)\right)^k. \label{Match}
\end{align}

Now, substituting~\eqref{pmstar},~\eqref{Coef} and~\eqref{Match} into the expression for $f$ (given in~\eqref{EdgesExpansion}), we obtain the following:
\begin{align}
f(m^*)
& = b^{-\binom{k}{2}} (b p)^{t k/2}
\frac{1}{t!^k}
\left(\frac{t k}{e}\right)^{t k/2}
\left(1 + O\left(\frac{\ln k}{\sqrt{k}}\right)\right)^k \nonumber\\
& = \left( b^{-k+1} \left(\frac{t b p k}{e}\right)^t \frac{1}{t!^2} \right)^{k/2} \left(1 + O\left(\frac{\ln k}{\sqrt{k}}\right)\right)^k.
\label{fmstar}
\end{align}

\subsection*{Upper bound on $\ex \left(\alpha_t^{(k)} (G_{n,p}) \right)$}

By~\eqref{expansion} and~\eqref{fmstar}, we deduce that 
\begin{align}
\prob (A\in S_t)
\le \left(\frac{t k}{2} + 1\right) f(m^*)
 = \left( b^{-k+1} \left(\frac{t b p k}{e}\right)^t \frac{1}{t!^2} \right)^{k/2} \left(1 + O\left(\frac{\ln k}{\sqrt{k}}\right)\right)^k. \label{ProbtStable}
\end{align}
Thus, we obtain, 
\begin{align}
\ex (\sk)
& \le \binom{n}{k} \left( b^{-k+1} \left(\frac{t b p k}{e}\right)^t \frac{1}{t!^2} \right)^{k/2} \left(1 + O\left(\frac{\ln k}{\sqrt{k}}\right)\right)^k \nonumber \\
& = \left( 
e^2 n^2 b^{-k+1} k^{t-2} \left(\frac{t b p}{e}\right)^t \frac{1}{t!^2} \right)^{k/2} \left(1 + O\left(\frac{\ln k}{\sqrt{k}}\right)\right)^k. \label{FinalCalc}
\end{align}

Now, if we set 
\( k=\lceil\atp{t}+\eps(n) \rceil \)
for some function $\eps(n) \gg \ln \ln n/\sqrt{\ln n}$, then, 
substituting this into~\eqref{FinalCalc}, we obtain
\begin{align*}
\ex (\sk) \le \left( \left(1+O\left(\frac{\ln \ln n}{\ln n}\right)\right) b^{-\eps} \right)^{k/2} \left(1 + O\left(\frac{\ln k}{\sqrt{k}}\right)\right)^k = o(1), 
\end{align*}
thus proving the right-hand side inequality in Theorem~\ref{1stability}.

\subsection*{Lower bound on $\ex (\alpha_t^{(k)} (G_{n,p}) )$}

To derive the lower bound on $\ex (\sk)$, we observe 
\[ \ex (\sk) \ge  \binom{n}{k}\prob(A \in S_t, \; e(A)=m^*). \]
Let $(d_1,\ldots ,d_k)$ be a degree sequence such that, for every 
$1\le i\le k$, $d_i\le t$ and $\sum_i d_i
=2 m^*$.
By Theorem 2.16 in~\cite{Bol01}, with $\la:=\frac{1}{m^*}\sum_i \binom{d_i}{2}$, the number of graphs with
this degree sequence is 
\[ (1+o(1)) e^{-\la/2-\la^2/4} \frac{(2 m^*)!}{m^*! 2^{m^*}}. \]
But, since $d_i\le t$
for every $i$, then using the estimate from Lemma~\ref{mstar}
we obtain $\la \le t^2 k/2 m^* \le 2 t$ for $k$ large enough. So the total number of
graphs on $k$ vertices, $m^*$ edges and with maximum degree at most $t$ is
at least
\[ \frac{e^{-t - t^2}}{2} C_{2 m^*}(t,k)\frac{(2 m^*)!}{m^*!2^{m^*}}. \]
Since $k=O(\ln n)$, we have $\binom{n}{k}=\Omega(\sqrt{1/k})(n e/k)^k$.  Hence, using~\eqref{fmstar}, we obtain   
\begin{align}
\ex (\sk)
& \ge \binom{n}{k} \prob(A \in S_t, \; e(A)=m^*)
 \ge \binom{n}{k} \frac{e^{-t - t^2}}{2} f(m^*) \nonumber\\
& = \left( 
e^2 n^2 b^{-k+1} k^{t-2} \left(\frac{t b p}{e}\right)^t \frac{1}{t!^2} \right)^{k/2} \left(1 + O\left(\frac{\ln k}{\sqrt{k}}\right)\right)^k. \label{FinalCalcLow}
\end{align} 

If $k = \lfloor \atp{t}-\eps(n) \rfloor$  (\(> \atp{t}-\eps(n)-1\)) where $\eps(n)$ is some function satisfying $\ln \ln n/\sqrt{\ln n} \ll \eps(n) \ll \ln n$, then by~\eqref{FinalCalcLow} we obtain
\begin{align}
\ex (\sk)
& \ge \left( \left(1+O\left(\frac{\ln \ln n}{\ln n}\right)\right) b^{ \eps (n)}\right)^{k/2} \left(1 + O\left(\frac{\ln k}{\sqrt{k}}\right)\right)^k
 = n^{\eps(n)(1 + o(1)) } \rightarrow \infty. \label{explow}
\end{align}
In the next section, we use a sharp concentration inequality to show moreover that the following holds. 
\begin{lemma} \label{ExpConc} If $\eps(n) \gg \ln \ln n/\sqrt{\ln n}$ is a function that satisfies $\limsup_{n \to \infty} \eps(n) \le 2$, then  
\[
\prob \left( \alpha_t (G_{n,p}) < \lfloor \atp{t}-\eps(n) \rfloor \right) \le \exp \left( - n^{\eps(n)(1+o(1))} \right).
\]
\end{lemma}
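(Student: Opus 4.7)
Set $k=\lfloor\atp{t}-\eps(n)\rfloor$ and $X=\sk$. By~\eqref{explow}, $\mu:=\ex(X)\ge n^{\eps(n)(1-o(1))}\to\infty$, so the first moment is already large; the task is to sharpen this to the exponential tail bound $\prob(X=0)\le\exp(-\mu^{1-o(1)})$.

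My plan is a second-moment plus Janson-type correlation inequality argument. Write $X=\sum_{A}I_A$ with $I_A=\mathbb{1}[A\in S_t]$, and expand $\ex(X^2)=\sum_{j=0}^{k}N_j P_j$ by partitioning pairs $(A,B)\in\binom{V_n}{k}^2$ according to overlap $j=|A\cap B|$, where $N_j=\binom{n}{k}\binom{k}{j}\binom{n-k}{k-j}$ and $P_j=\prob(A\in S_t,\,B\in S_t)$ for any such pair. Disjoint pairs give $N_0 P_0=\mu^2(1+o(1))$, so the core of the argument is to show $\sum_{j\ge 1}N_j P_j=o(\mu^2)$. Here the hypothesis $\limsup\eps(n)\le 2$ enters crucially: it forces $\mu\le n^{2+o(1)}$, which is a super-polynomial factor smaller than $\binom{n}{k}=n^{\Theta(\ln n)}$, so the overlapping pairs are dominated by disjoint ones at every level $j$. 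Once $\ex(X^2)\le(1+o(1))\mu^2$ is in hand, a Janson-style inequality (classical Janson for $t=0$, applied in the ``absence-of-edges'' probability space where each $I_A$ becomes a monotone ``all these non-edges present'' event; for $t\ge 1$, a refined variant obtained by first expanding $I_A=\sum_{H}\mathbb{1}[G[A]=H]$ over graphs $H$ on $A$ of maximum degree at most $t$ and then applying Suen's inequality, or a direct correlation bound, to the enlarged family $\{(A,H)\}$) yields $\prob(X=0)\le\exp(-\mu^2/\ex(X^2))=\exp(-\mu(1-o(1)))$, which is exactly the claimed bound.

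The main obstacle is the estimate of $P_j$ for intermediate overlap $j$. This is the probability that two $k$-subsets sharing $j$ vertices both induce subgraphs of maximum degree $\le t$, and to compute it I would sum over joint degree sequences on the three pieces $A\setminus B$, $A\cap B$, $B\setminus A$ and perform a coupled saddle-point analysis in three radii --- one per piece --- analogous to Theorem~\ref{coupons}, the radii being linked through the edges incident to $A\cap B$. For $j=O(1)$ the coupling is weak and $P_j$ factors essentially as $\prob(A\in S_t)^2$ with small corrections; for $j$ close to $k$ the calculation reduces to a one-block saddle-point applied to $A\cup B$; but the intermediate range requires the full three-variable saddle-point computation, carried out to precision $(1+o(1))^k$, which is what keeps the final Janson exponent on the right scale $\mu^{1-o(1)}=n^{\eps(n)(1+o(1))}$.
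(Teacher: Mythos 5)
Your high-level plan — apply a Janson-type correlation inequality and control the sum over overlapping pairs — is the same as the paper's, and you are right that a Chebyshev-only second moment cannot produce a super-polynomially small failure probability, so a Janson/Suen bound is needed. However, the central technical step is proposed along a route that is both far harder than necessary and not actually carried out, and the final accounting is imprecise in a way that obscures where the hypothesis $\limsup \eps(n) \le 2$ is really used.

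The key gap is your treatment of $P_j$. You propose a coupled three-radii saddle-point analysis to estimate $\prob(A,B\in S_t)$ for intermediate overlap $j$, noting the need for precision $(1+o(1))^k$. This is not necessary and would be very painful to push through. The paper instead observes that $t$-stability is a \emph{decreasing} graph property, so conditioning on $\{B\in S_t\}$ can only push $\{A\in S_t\}$ to be less likely than conditioning on ``$A\cap B$ spans no edges''; hence
\[
\prob(A\in S_t \mid B\in S_t)\le \frac{\prob(A\in S_t)}{\prob(\text{$A\cap B$ is edge-free})}=b^{\binom{\ell}{2}}\prob(A\in S_t),
\]
giving $p(k,\ell)\le b^{\binom{\ell}{2}}\prob(A\in S_t)^2$ with no saddle-point work at all. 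This monotonicity observation is the heart of the argument, and it is missing from your proposal. For overlaps very close to $k$ a separate, equally elementary bound is used (each vertex of $B\setminus A$ must have at most $t$ neighbours in $A\cap B$), not a reduction to a one-block saddle point.

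Your conclusion $\prob(X=0)\le\exp(-\mu(1-o(1)))$ also overstates what Janson delivers. The inequality gives $\exp\bigl(-\mu^2/(\mu+\Delta)\bigr)$, and the paper establishes $\Delta_1=O(\ln^5 n/n^2)\,\mu^2$ and $\Delta_2=o(\mu)$, then splits: if $\Delta\le\mu$ the exponent is $\Omega(\mu)=n^{\eps(1+o(1))}$; if $\Delta>\mu$ the exponent is $\Omega(n^2/\ln^5 n)=n^{2+o(1)}$, and \emph{this} is where $\limsup\eps\le2$ is used, to conclude $n^{2+o(1)}\ge n^{\eps(1+o(1))}$. Your explanation that the hypothesis ``forces $\mu\le n^{2+o(1)}$, ... so the overlapping pairs are dominated'' is not where the hypothesis actually bites. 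Finally, your remark that classical Janson needs adaptation for $t\ge1$ is a legitimate point of care; the paper simply invokes the general form of the inequality for locally dependent decreasing events from Janson--\L{}uczak--Ruci\'nski without elaboration, and your $(A,H)$ decomposition or Suen's inequality would indeed be one way to justify it if one wanted to argue from first principles.
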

Since the right-hand side is $o(1)$, we obtain the lower bound of Theorem~\ref{1stability}.  This lemma will also be a key step in the proof of the upper bound of Theorem~\ref{tchrom}, when we need the fact that the right-hand side tends to $0$ quickly.


\section{A second moment calculation - Proof of Lemma~\ref{ExpConc}} \label{2ndMom}
Let  $(x_n)$ be a bounded sequence of real numbers such that for
\[ k = 2\log_b n+(t-2)\log_b\log_b n+x_n \in \mathbb{N} \]
we have $\ex (\sk)\rightarrow \infty$ as $n \rightarrow \infty$.  
In this section, we prove that a.a.s.~there is a $k$-subset of $V_n$ which is $t$-stable, using a second moment argument.  For this, we use Janson's Inequality (\cite{Jan90},~\cite{JLR90} or Theorems~2.14,~2.18 in~\cite{JLR00}):
\begin{align} \label{Janson} \prob (\sk = 0) \le \exp \left( - \frac{\ex^2(\sk)}{\ex(\sk) + \Delta }\right),
\end{align}
where   
\[
\Delta = \sum_{A, B \subseteq V_n, k-1\ge |A \cap B|\ge 2} \prob (A, B \in S_t). 
\]
Let $p(k,\ell)$ be the probability that two $k$-subsets of $V_n$ that
overlap on exactly $\ell$ vertices are both in $S_t$.
We write
\begin{align*} 
\Delta = & \sum_{\ell=2}^{k-\lfloor (t+3) \log_b \log_b n\rfloor} \binom{n}{k} \binom{k}{\ell} \binom{n - k}{k-\ell} p(k,\ell) \nonumber \\
  & + \sum_{\ell=k-\lfloor (t+3) \log_b \log_b n \rfloor+1}^{k-1} \binom{n}{k} \binom{k}{\ell} \binom{n - k}{k-\ell} p(k,\ell) 
=:  \Delta_1 + \Delta_2.
\end{align*}
We conclude the proof of Lemma~\ref{ExpConc} by showing that 
\[\Delta_1 = O\left(\frac{\ln^5 n}{n^2}\right) \ex^2 (\sk) \ \mbox{and} \ \Delta_2 = o(\ex (\sk)).\]
By~\eqref{explow}, $\ex (\sk) \ge n^{\eps(n)(1 + o(1))}$.
If $\ex (\sk) < \Delta$, then by the above estimates for $\Delta_1$ and $\Delta_2$, we have $\ex(\sk)+\Delta = O(\ln^5 n/n^2) \ex^2(\sk)$ and, by~\eqref{Janson},
\[ \prob (\sk = 0) \le \exp \left( -\Omega \left(\frac{n^2}{\ln^5 n}\right)\right) = \exp\left(-n^{2 + o(1)}\right) \le \exp\left(-n^{\eps(n)(1 + o(1))}\right)\]
(where the last inequality uses $\limsup_{n\to\infty}\eps(n)\le 2$). Otherwise, we have $\ex(\sk)+\Delta\le2\ex(\sk)$ and
\[ \prob (\sk = 0) \le \exp \left( -\frac12 \ex(\sk)\right) \le \exp\left(-n^{\eps(n)(1 + o(1))}\right).\]

\subsubsection*{{\bf Bounding} $\Delta_1$}

Let us begin by bounding $\Delta_1$, first estimating $p(k,\ell)$. 
Let $A$ and $B$ be two $k$-subsets of $V_n$ that
overlap on exactly $\ell$ vertices, i.e.~$|A \cap B|=\ell$. Then $p(k,\ell)=\prob (A,B \in S_t) = \prob (A \in S_t \; | \; B \in S_t)
\prob (B \in S_t)$.

The property of having maximum degree at most $t$ is monotone
decreasing; so if we condition on the
set $E$ of edges induced by $A\cap B$, then the conditional
probability that $A \in \mathcal{S}_{t}$ is maximized when $E =
\emptyset$.  Thus,
\begin{equation*}
\prob \left( A \in S_{t} \ | \ B \in S_{t} \right)
\le \prob \left( A \in S_{t} \ | \  E = \emptyset \right)
\le \frac{\prob (A \in S_{t})}{\prob(E = \emptyset)} =
b^{\binom{\ell}{2}} \prob (A \in S_{t}).
\end{equation*}
Therefore,
\begin{align}
p(k,\ell)
 = \prob (A \in S_t \; | \; B \in S_t)\prob(B \in S_t)
 \le b^{\binom{\ell}{2}} \left(\prob(A \in S_t)\right)^2. \label{Bound}
\end{align}
On the other hand, for every $\ell\le k$,
\[ \binom{k}{\ell} \binom{n-k}{k-\ell} \le k^\ell \frac{k^\ell}{(n-k)^{\ell}}\binom{n}{k}. \]
Using the estimate of~\eqref{Bound} along with the above inequality, we have 
\begin{align} \label{Delta1} 
\Delta_1 & \le  \left(\binom{n}{k}\prob(A \in S_t) \right)^2 \ 
\sum_{\ell=2}^{k-\lfloor (t+3) \log_b \log_b n \rfloor} 
\left(\frac{k^2}{n-k}\right)^\ell b^{\binom{\ell}{2}}\nonumber \\
& \le \ex^2 (\alpha_t^{(k)} (G_{n,p})) \sum_{\ell=2}^{k-\lfloor (t+3) \log_b \log_b n \rfloor} \left(\frac{k^2}{n-k}\right)^\ell b^{\binom{\ell}{2}}.
\end{align}
If we set $s_{\ell} = (k^2/(n-k))^\ell  b^{\binom{\ell}{2}}$, then 
\(
s_{\ell + 1}/s_{\ell} = b^{\ell} k^2/(n-k).
\) 
So the sequence $\{s_{\ell}\}$ is 
strictly decreasing for $\ell < \log_b (n-k) - 2 \log_b k$ and is strictly increasing for 
$\ell > \log_b (n-k) - 2 \log_b k$. So 
\[ \max \{ s_{\ell} : 2 \le \ell \le k- \lfloor(t+3) \log_b \log_b n \rfloor \} \le 
\max \left\{ s_2 ,  s_{\lceil 2\log_b n-4.5\log_b \log_b n \rceil} \right\}. \]
We have that $s_2 = b k^4/(n-k)^2$, but
\begin{align*}
s_{\lceil 2\log_b n-4.5\log_b \log_b n \rceil}
&\le \left( \frac{k^2}{n-k} b^{\log_b n-2.25\log_b \log_b n}\right)^{2\log_b n-4.5\log_b \log_b n} \\
&\le \left( \frac{4 \log_b^2 n}{\log_b^{2.25} n} \right)^{2\log_b n-4.5\log_b \log_b n}
\le \left( \frac{4}{\log_b^{0.25} n} \right)^{\log_b n} = o(s_2).
\end{align*}
Thus, Inequality~\eqref{Delta1} now becomes for $n$ large enough
\begin{align*} 
\Delta_1 \le \frac{b k^5}{(n-k)^2} \ex^2 (\alpha_t^{(k)}(G_{n,p})) = O\left(\frac{\ln^5 n}{n^2}\right)\ex^2 (\alpha_t^{(k)}(G_{n,p})). 
\end{align*}

\subsubsection*{{\bf Bounding } $\Delta_2$} 
 
Now, we will show that 
\(\Delta_2 = o(\ex(\alpha_t^{(k)} (G_{n,p})))\).
First, we have 
\begin{align*}
\binom{k}{\ell} \binom{n-k}{k-\ell} \le (k n)^{k-\ell}.
\end{align*}
We now give a rough estimate on $p(k,\ell)$.
If $A, \ B$ are two $k$-sets of vertices that overlap on $\ell$ vertices (and if $\deg_S(v)$ denotes the number of neighbours of $v$ in $S$),
then 
\begin{align*}
\prob (B \in S_t \; | \; A \in S_t)
& \le \prob (\forall v \in B\setminus A, \ \deg_{A\cap B}(v) \le t)
 \le \left( \binom{\ell}{\ell-t}(1-p)^{\ell-t} \right)^{k-\ell} \\
& \le \left( k^{t} b^{t-\ell}\right)^{k-\ell}
 \le b^{\left(t\log_b k+t-k+\lfloor (t+3) \log_b \log_b n \rfloor \right)(k-\ell)}\\
& = b^{\left(-2 \log_b n + (t + 5) \log_b \log_b n + \Theta(1)\right)(k-\ell)}
 \le \left(\frac{\log_b^{t+6} n}{n^2}\right)^{k-\ell}.
\end{align*}
Substituting these estimates into the expression for $\Delta_2$, we obtain 
\begin{align*}
\Delta_2 & \le \binom{n}{k} \prob (A\in S_t)\sum_{\ell=k-\lfloor (t+3) \log_b \log_b n \rfloor+1}^{k-1}
\left(k n\frac{\log_b^{t+6} n}{n^2 } \right)^{k-\ell} \\
& \le \ex (\alpha_t^{(k)} (G_{n,p})) k \left(\frac{k \log_b^{t+6} n}{n} \right) 
 = o(\ex (\alpha_t^{(k)} (G_{n,p}))).
\end{align*}

\section{The $t$-improper chromatic number} \label{chrom}

\subsection{The upper bound}
Our general approach follows Bollob\'as~\cite{Bol88} --- see also \cite{McD90}.  We revisit the analysis in order to obtain an improved upper bound to match the lower bound of Panagiotou and Steger~\cite{PaSt09}.
For a fixed  $0 < \eps < 1$,
we set $\hat{\alpha}_{t,p} (n) = \lfloor \atp{t} -1 - \eps \rfloor $. 
First, we will show the following.
\begin{lemma} 
A.a.s.~for all $V' \subseteq V_n$ with $|V'| \ge n/\ln^3 n$, 
we have $\alpha_t (G_{n,p}[V']) \ge \hat{\alpha}_{t,p}(|V'|)$. 
\end{lemma}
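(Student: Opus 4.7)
The key observation is that for any fixed $V' \subseteq V_n$ with $|V'| = n'$, the induced subgraph $G_{n,p}[V']$ is distributed as $G_{n',p}$, because edges are included independently of one another. This permits a direct application of Lemma~\ref{ExpConc} to each induced subgraph, and the strategy is then a union bound over all qualifying subsets $V'$.

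Concretely, I would apply Lemma~\ref{ExpConc} with the constant choice $\eps'(n') := 1 + \eps$. Since $0 < \eps < 1$, this satisfies both conditions of the lemma: the growth condition $\eps'(n') \gg \ln\ln n'/\sqrt{\ln n'}$ (trivially, as a positive constant), and $\limsup_{n' \to \infty}\eps'(n') = 1+\eps \le 2$. For any fixed $V' \subseteq V_n$ with $|V'| = n' \to \infty$, the lemma then yields
\[ \prob\bigl(\alpha_t(G_{n,p}[V']) < \hat{\alpha}_{t,p}(n')\bigr) \le \exp\bigl(-(n')^{(1+\eps)(1+o(1))}\bigr), \]
since $\hat{\alpha}_{t,p}(n') = \lfloor \atp{t}(n') - 1 - \eps\rfloor = \lfloor \atp{t}(n') - \eps'(n')\rfloor$.

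To conclude, I would union-bound over all $V' \subseteq V_n$ with $n' := |V'| \ge n/\ln^3 n$. There are at most $2^n$ such subsets in total. For each, $(n')^{1+\eps} \ge (n/\ln^3 n)^{1+\eps} = n^{1+\eps}/\ln^{3(1+\eps)} n$, which is $\omega(n)$ and in particular much larger than $n\ln 2$. Hence
\[ \prob\bigl(\exists\, V' \subseteq V_n \text{ with } |V'|\ge n/\ln^3 n \text{ and } \alpha_t(G_{n,p}[V']) < \hat{\alpha}_{t,p}(|V'|)\bigr) \le 2^n \exp\bigl(-(n/\ln^3 n)^{(1+\eps)(1+o(1))}\bigr) = o(1). \]

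The only subtle point, and the closest thing to an obstacle, is that the $o(1)$ hidden in the exponent of Lemma~\ref{ExpConc} must be uniform over the range $n' \in [n/\ln^3 n, n]$. This is essentially automatic: the proof of Lemma~\ref{ExpConc} (together with the second-moment estimates of Section~\ref{2ndMom}) depends only on $n' \to \infty$, which certainly holds throughout this range as $n \to \infty$, and not on the relationship between $n'$ and the ambient $n$. So the same estimates apply simultaneously to all such $n'$, and the union bound carries through.
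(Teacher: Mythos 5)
Your proof is correct and follows essentially the same route as the paper: apply Lemma~\ref{ExpConc} to each induced subgraph $G_{n,p}[V']$ (which is distributed as $G_{|V'|,p}$), with $\hat{\alpha}_{t,p}(n') = \lfloor \atp{t} - (1+\eps)\rfloor$ corresponding to the choice $\eps'= 1+\eps\le 2$, and then union-bound over the at most $2^n$ subsets. Your explicit remark about uniformity of the $o(1)$ term over $n' \in [n/\ln^3 n, n]$ is a legitimate point that the paper leaves implicit, and it is correctly resolved.
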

\begin{proof}
Note that~\eqref{explow} implies that 
for any $V'\subseteq V_n$ with $|V'| \ge n/\ln^3 n$, we have 
\[ \ex \left( \alpha_{t,p}^{(\hat{\alpha}_{t,p}(|V'|))} (G_{n,p}[V']) \right) \ge |V'|^{1+\eps +o(1)}.\] 
So, applying Lemma~\ref{ExpConc}, we deduce that 
\[ \prob \left( \alpha_t (G_{n,p}[V']) < \hat{\alpha}_{t,p}(|V'|) \right) = \exp \left( - |V'|^{1+\eps+o(1)}\right) \le 
\exp \left(-\left(\frac{n}{\ln^3 n}\right)^{1+\eps+o(1)} \right).\]
Since there are at most $2^n$ choices for $V'$, the probability that there exists a set $V' \subseteq V_n$ 
with $|V'| \ge n/\ln^3 n$ and $\alpha_t (G_{n,p}[V']) < \hat{\alpha}_{t,p}(|V'|)$ is at most 
$2^n \exp \left( -(n/\ln^3 n)^{1+\eps+o(1)}\right) = o(1)$. 
\end{proof}

We consider the following algorithm for $t$-improperly colouring $G_{n,p}$. Let $V'=V_n$.  While $|V'|\ge n/\ln^3 n$, 
we choose and remove a $t$-stable set from $G_{n,p}[V']$ of size $\hat{\alpha}_t (|V'|)$.  
At the end, we obtain a collection of $t$-stable sets and each of them will form a colour class.
The above lemma implies that 
a.a.s.~we will be able to perform this algorithm, and end up with a set of at most $n/\ln^3 n$ vertices. We give a 
different a colour to each of these vertices. Thus, if the above algorithm ``runs" for $f(n)$ steps,  then
$\chi_t(G_{n,p}) \le f(n) + n/\ln^3 n$.

Since $\alpha_{t,p}(s)- 1 - \eps$ is strictly increasing for all $s$ that 
are sufficiently large, for these $s$ the function $\hat{\alpha}_{t,p} (s)$ is non-decreasing. 
It is easy to see that 
\begin{align*}
\hat{\alpha}_{t,p}\left(\left\lceil\frac{n}{\ln^3 n}\right\rceil\right)
& = 2\log_b n \left( 1+O\left(\frac{\ln \ln n}{\ln n} \right)\right)
 = \hat{\alpha}_{t,p}(n) \left( 1+O\left(\frac{\ln \ln n}{\ln n} \right)\right).
\end{align*}
Since $\hat{\alpha}_{t,p} (\lceil n/\ln^3 n \rceil)\le \hat{\alpha}_{t,p}(s) \le \hat{\alpha}_{t,p}(n)$ 
for all integers  $n/\ln^3 n \le s \le n$, 
\begin{align} \label{alphaest}
\hat{\alpha}_{t,p} (s) = \hat{\alpha}_{t,p}(n) \left( 1+O\left(\frac{\ln \ln n}{\ln n} \right)\right),
\end{align}
and therefore
\begin{align} \label{fnrough}
f(n)=\frac{n}{\hat{\alpha}_{t,p}(n)}\left(1+O\left(\frac{\ln \ln n}{\ln n}\right)\right).
\end{align}
Assume that there are $n_i$ vertices available
when we have removed $i$ $t$-stable sets from $V_n$.
Thus, the $t$-stable set that will be picked during the $(i+1)$th iteration will have size $\hat{\alpha}_{t,p} (n_i)$. 
Since the colouring algorithm stops as soon as there are less than $n/\ln^3 n$ vertices available, 
the following inequality holds:
\begin{align} \label{fnbound}
 \sum_{i=0}^{f(n)-2}  \hat{\alpha}_{t,p}(n_i) \le n\left(1 - \frac{1}{\ln^3 n}\right) \le n. 
\end{align}
Note that for all $i\ge 0$, $n_i = n -\sum_{j=0}^{i-1} \hat{\alpha}_{t,p} (n_j)$.
Therefore, 
\begin{align*}
\log_b n_i & = \log_b \left(n- \sum_{j=0}^{i-1} \hat{\alpha}_{t,p} (n_j)\right) 
 = \log_b n + \log_b \left(1 - \frac{\sum_{j=0}^{i-1} \hat{\alpha}_{t,p} (n_j)}{n} \right).
\end{align*}
We have\COMMENT{Note that $\int \ln (1-x) dx = -(1-x)\ln (1-x) + 1-x$.}
\begin{align*} 
\sum_{i=0}^{f(n)-2} \log_b \left(1 - \frac{\sum_{j=0}^{i-1}\hat{\alpha}_{t,p} (n_j)}{n} \right)
& =\frac{1}{\ln b}\sum_{i=0}^{f(n)-2} \frac{n}{\hat{\alpha}_{t,p}(n_i)} \ln \left(1 - \frac{\sum_{j=0}^{i-1} \hat{\alpha}_{t,p} (n_j)}{n} \right)  \frac{\hat{\alpha}_{t,p}(n_i)}{n}  \\
& \stackrel{\eqref{alphaest}}{=} \frac{n (1+o(1))}{\hat{\alpha}_{t,p}(n) \ln b} \sum_{i=0}^{f(n)-2} \ln \left(1 - \frac{\sum_{j=0}^{i-1} \hat{\alpha}_{t,p} (n_j)}{n} \right)  \frac{\hat{\alpha}_{t,p}(n_i)}{n} \\
& =\frac{n (1+o(1))}{\hat{\alpha}_{t,p}(n) \ln b} \int_{0}^{1} \ln (1-x)d x
 = -\frac{n (1+o(1))}{\hat{\alpha}_{t,p}(n) \ln b}. 
\end{align*}
So 
\begin{align} \label{logs}
\sum_{i=0}^{f(n)-2} 2\log_b n_i = (f(n)-1)2\log_b n 
- \frac{2n (1+o(1))}{\hat{\alpha}_{t,p}(n) \ln b}.
\end{align}

Also,
\begin{align*}
\log_b \log_b n_i 
& \ge \log_b \log_b \left(\frac{n}{\ln^3 n}\right)
= \log_b \log_b n + \log_b \left(1 - \frac{3\log_b \ln n}{\log_b n} \right) \\
& = \log_b \log_b n - O\left(\frac{\ln \ln n}{\ln n} \right).
\end{align*}
Moreover, $\log_b \log_b n_i \le \log_b \log_b n$ so, for every $t\ge 0$, 
\begin{align} \label{loglogn}
(t-2) \sum_{i=0}^{f(n)-2} \log_b \log_b n_i \ge
(f(n)-1)(t-2)\log_b \log_b n - O\left(\frac{f(n) \ln \ln n}{\ln n} \right).
\end{align}
Now, Equality~\eqref{logs} and Inequality~\eqref{loglogn} imply that 
for every $t \ge 0$ we have 
\begin{align*}
\sum_{i=0}^{f(n)-2}  \hat{\alpha}_{t,p} (n_i) 
& \ge (f(n)-1)\left( \atp{t} - \eps - 2 \right) 
-\frac{2n (1+o(1))}{\hat{\alpha}_{t,p}(n) \ln b}
- O\left(\frac{f(n) \ln \ln n}{\ln n} \right) \\
& \ge (f(n)-1)\left( \atp{t} -\eps - 2  - \frac{2n (1+o(1))}{f(n) \hat{\alpha}_{t,p}(n) \ln b} 
- O\left(\frac{\ln \ln n}{\ln n} \right) \right) \\
& \stackrel{\eqref{fnrough}}{=} (f(n)-1) \left( \atp{t} -\eps - 2
- \frac{2}{\ln b}- o(1)\right). 
\end{align*}
So by~\eqref{fnbound} we obtain
\begin{align*} 
f(n)-1\le \frac{n}{\atp{t} - 2/\ln b -2 - \eps - o(1)}. 
\end{align*}

\subsection{The lower bound}
This proof is the generalisation of a proof of the lower bound on the chromatic number of a dense 
random graph given recently by Panagiotou and Steger~\cite{PaSt09}.
We let $\alpha_C (n) = 2\log_b n + (t-2)\log_b \log_b n - C$, where $C =C_n > 2\log_b n + (t-2)\log_b \log_b n - \atp{t}$
is some function which is $\Theta (1)$, such that $\alpha_C(n)$ is integral. We specify $C$ at a later stage. 
Let $r= r_C := \lfloor n/\alpha_C (n) \rfloor$.
By Theorem~\ref{1stability}, a.a.s.~there are no $t$-stable sets in $G_{n,p}$ of size more than $\atp{t}+1$. 
(In fact, according to Theorem~\ref{1stability}, we could have used the bound $\atp{t}+\eps$, but this would not give any improvement.) 
We will estimate the expected number of $t$-improper colourings of $G_{n,p}$ with 
$r$ colours such that each colour set has size at most $\atp{t}+1$.
In particular, we show that, if
$C < 2\log_b n + (t-2)\log_b \log_b n - \atp{t} + 1 + 2/\ln b - \eps$,
then this expectation converges to zero, proving that $\chi_t (G_{n,p}) > r_C$ a.a.s. 

Let $\D$ denote the set of $r$-tuples of positive integers $(k_1,\ldots , k_r)$ such that $\sum_{i=1}^r k_i =n$ and $k_i \le \atp{t} + 1$ for all $i$. For some $(k_1,\ldots , k_r) \in \D$, let $\Pa = (P_1,\ldots, P_r)$ denote a partition of $V_n$ into 
$r$ non-empty parts $P_1,\ldots, P_r$ such that $|P_i|=k_i$. 
From~\eqref{ProbtStable}, we obtain
\begin{align*}
\prob (P_i\in S_t)
\le \left( b^{-k_i+1} \left(\frac{t b p k_i}{e}\right)^t \frac{1}{t!^2} \right)^{k_i/2} \left(1 + O\left(\frac{\ln k_i}{\sqrt{k_i}}\right)\right)^{k_i}.
\end{align*}
 
\begin{align*}
\prob (P_i \in S_t, \ \forall i)
& = \prod_{i=1}^r \prob (P_i \in S_t)
 \le \prod_{i=1}^r \left( b^{-k_i+1} \left(\frac{t b p k_i}{e}\right)^t \frac{1}{t!^2} \right)^{k_i/2} \left(1 + O\left(\frac{\ln k_i}{\sqrt{k_i}}\right)\right)^{k_i} \nonumber \\
&= b^{-\left(\sum_{i=1}^r {k_i}^2/2\right) + n/2} 
\left(\frac{t b p}{e}\right)^{t n/2}\left(\prod_{i=1}^r {k_i}^{t k_i/2} \right)
\frac{1}{t!^{n}}
(1 + o(1))^n \nonumber \\
&= \left(\frac{t b^{1 + 1/t}p}{e t!^{2/t}}\right)^{t n/2} b^{-\sum_{i=1}^r {k_i}^2/2} 
\left(\prod_{i=1}^r {k_i}^{t k_i/2}\right)
(1 + o(1))^n,
\end{align*} 
uniformly over all $(k_1,\ldots, k_r) \in \D$.
So, if $X_{t,r} = X_{t,r}(G_{n,p})$ denotes the number of $t$-improper colourings with $r$ colours and with each colour class of size at most 
$\atp{t} +1$, then 
\begin{align} \label{ExpectCols}
\ex (X_{t,r}) = \frac{1}{r!} \left(\frac{t b^{1 + 1/t}p}{e t!^{2/t}}\right)^{t n/2} 
\sum_{(k_1,\ldots, k_r) \in \D} \binom{n}{k_1 \cdots k_r} b^{-\sum_{i=1}^r\left(\frac{k_i^2}{2} - \frac{t}{2} k_i \log_b k_i\right)} (1 + o(1))^n.
\end{align}
We call a partition where all parts differ by at most one pairwise \emph{balanced}. 
In the next subsection, we give a routine proof of the following property of balanced partitions.
\begin{lemma} \label{MaxBal}
For large enough $n$, the function
\[h(P) :=-\sum_{i=1}^r \left(\frac{k_i^2}{2} - \frac{t}{2} k_i \log_b k_i\right),\]
where 
$P=\{P_1,\ldots, P_r \}$ is a partition of $V_n$ with $|P_i|=k_i$, is maximised over $\D$ when $P$ is a balanced partition.
\end{lemma}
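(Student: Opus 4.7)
The plan is to reduce the maximization of $h(P) = -\sum_{i=1}^r g(k_i)$, where $g(x) := x^2/2 - (t/2)\, x \log_b x$, to minimization of $\sum_i g(k_i)$ over $\D$, and to handle this by an exchange argument on the discrete first difference $\Delta g(m) := g(m+1) - g(m)$. Since $g''(x) = 1 - t/(2 x \ln b)$, the function $g$ is strictly convex on $(x_0, \infty)$, where $x_0 := t/(2 \ln b)$ is a constant depending only on $t$ and $p$. Writing
\[
\Delta^2 g(m) := g(m+2) - 2g(m+1) + g(m) = \int_m^{m+1} \int_x^{x+1} g''(y)\, dy\, dx,
\]
one sees that $\Delta^2 g(m) > 0$ for every integer $m \ge K_0$, where $K_0$ is any fixed integer strictly greater than $x_0$; hence $\Delta g$ is strictly increasing on $\{K_0, K_0+1, \ldots\}$. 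By the standard swap argument, for any tuple $(k_1, \ldots, k_r) \in \D$ with each $k_i \ge K_0$ and any pair of indices with $k_i + 1 \le k_j - 1$, replacing $(k_i, k_j)$ by $(k_i + 1, k_j - 1)$ changes the objective by $\Delta g(k_i) - \Delta g(k_j - 1) < 0$. So within this regime, any minimizer of $\sum_i g(k_i)$ is balanced.

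It remains to rule out minimizers in which some $k_i$ is smaller than $K_0$. Here the key input is quantitative: $r = \lfloor n / \alpha_C(n) \rfloor$ and $\alpha_C(n) = 2 \log_b n + O(\log \log n) \to \infty$, so the average part size $n/r$ is at least $\alpha_C(n)$. In particular, any tuple in $\D$ has some part $k_j \ge \alpha_C(n) - 1$. If another part $k_i$ satisfies $k_i < K_0$, then shifting one unit from $k_j$ to $k_i$ changes $\sum_i g(k_i)$ by $\Delta g(k_i) - \Delta g(k_j - 1)$; but $|\Delta g(k_i)|$ is bounded by a constant $M_0$ depending only on $K_0$, $t$ and $b$ (since $k_i$ lies in the finite set $\{1, \ldots, K_0 - 1\}$), while
\[
\Delta g(k_j - 1) = k_j - \tfrac{1}{2} - \tfrac{t}{2 \ln b} \bigl[k_j \ln k_j - (k_j - 1) \ln (k_j - 1)\bigr] = k_j - O(\log k_j) \to \infty
\]
as $n$ grows. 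So for $n$ large this swap strictly decreases $\sum_i g(k_i)$, contradicting minimality. Combining the two steps, every minimizer over $\D$ is balanced, which is precisely the lemma.

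The only delicate point is that $g$ is \emph{not} discretely convex down to the smallest admissible part sizes, so the usual one-step exchange argument fails to apply directly when some $k_i$ is very small. The fix is the observation in the second paragraph: because $\alpha_C(n) \to \infty$, any tuple with a small part also contains a very large part, and the swap between the two pays off by an amount linear in the large part, easily absorbing the bounded non-convex error near the small part.
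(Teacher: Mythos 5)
Your proof is correct, and its core move is the same as the paper's: transfer one vertex from a large part to a small part and show this strictly improves the objective. The difference lies in how positivity of the change is argued. The paper (via Lemma~\ref{Swap}) computes $h(\tilde{P}) - h(P)$ directly and case-splits on the gap $k_r - k_1$: when the gap is at most $\ln\ln n$, both $k_1$ and $k_r$ are close to $\alpha_C(n)$, so the logarithmic correction $t(\log_b k_r - \log_b k_1)$ is $o(1)$; when the gap exceeds $\ln\ln n$, the linear term $2(k_r - k_1 - 1)$ swamps the logarithmic one regardless of how small $k_1$ is. You instead make the structural reason explicit: $g(x) = x^2/2 - (t/2)x\log_b x$ is strictly convex exactly for $x > t/(2\ln b)$, so the standard discrete-convexity exchange settles the case where every part is at least a fixed threshold $K_0$, and the non-convex regime near the origin is patched by pairing the small part with a guaranteed huge one (of size at least $\alpha_C(n)\to\infty$), where the linear growth of $\Delta g$ at the large end absorbs the $O(1)$ error at the small end. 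The two case splits are genuinely different --- yours is on the absolute size of the smallest part, the paper's on the gap $k_r-k_1$ --- but they cover the same territory, and the paper's ``large gap'' case rests on the same fact you isolate (linear growth beats logarithmic). Your framing has the advantage of pinpointing exactly where and why naive convexity fails and how the constraint $r=\lfloor n/\alpha_C(n)\rfloor$ rescues it, at the cost of introducing an auxiliary constant $K_0$; the paper's version is a single, more computational Taylor estimate. Both are sound.
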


Let $B$ be a balanced partition. Then all parts have sizes either equal to $\alpha_C (n)$ or to $\alpha_C(n)+1$ and there are less than
$\alpha_C(n)$ parts that take the latter quantity. 
Then 
\begin{align} \label{Balanced}
h(B) &= -\frac{n}{\alpha_C (n)} \left( \frac{\alpha_C^2(n)}{2} - \frac{t}{2} \alpha_C(n) \log_b \alpha_C(n) \right) +o(n) \nonumber \\
&= -\frac{1}{2} n \alpha_C(n) + \frac{t}{2}n \log_b \alpha_C(n) +o(n) \nonumber \\
&= -n \log_b n - \frac{t-2}{2}n \log_b \log_b n +\frac{Cn}{2} + \frac{t}{2}n \log_b 2 + \frac{t}{2}n \log_b \log_b n  +o(n) \nonumber \\
&= -n \log_b n + n \log_b \log_b n + \frac{Cn}{2} + \frac{t}{2}n \log_b 2 + o(n).
\end{align}
Also, for any $(k_1,\ldots, k_r) \in \D$, we have (for $n$ large enough) 
\begin{align} 
\binom{n}{k_1 \cdots k_r} &\le \frac{n!}{\left(\alpha_C(n)!\right)^{r}} = O(n^{1/2}) \frac{n^n}{(\alpha_C(n))^{n} 
\left(\sqrt{2\pi \alpha_C(n)}\right)^{r}} \le \frac{n^n}{(\alpha_C(n))^{n+r/2} } \nonumber \\
&= b^{n \log_b n - n \log_b \alpha_C(n) - \frac{r}{2} \log_b \alpha_C(n)} = b^{n \log_b n - n \log_b 2 - n \log_b \log_b n +o(n)} \label{Multi}
\end{align}
since $r \log_b \alpha_C (n) \le (n/\alpha_C(n)) \log_b \alpha_C(n) =o(n)$.
Finally, $r!\ge r^re^{-r}$ and therefore
\begin{align} \label{factorial}
\frac{1}{r!} \le b^{-r\log_b r +r\log_b e} = b^{-\frac{n}{\alpha_C(n)} \log_b \left(\frac{n}{\alpha_C(n)} \right) +o(n)} 
= b^{-n \frac{\log_b n}{\alpha_C(n)}  +o(n)} = b^{-\frac{n}{2} + o(n)}.
\end{align}
As there are at most $\binom{n}{r}\le (e n/r)^{r} \le 
\left({2 e \alpha_C(n)} \right)^{r}
\le 
b^{r\log_b \alpha_C(n) + O(r)} = b^{o(n)}$ summands in~\eqref{ExpectCols}, we obtain from~\eqref{Balanced},~\eqref{Multi} and~\eqref{factorial} that
\begin{align*}
\ex (X_{t,r})&\le \left(\frac{t b^{1 + 1/t}p}{e t!^{2/t}}\right)^{t n/2} 
b^{\frac{Cn}{2} + \frac{t}{2}n \log_b 2- n \log_b 2 -\frac{n}{2}} (1 + o(1))^n\\ 
&=  b^{ \frac{n}{2}\left(t\log_b (2 t p/e) + t  -2 \log_b t!+ C - 2 \log_b 2 \right) } (1 + o(1))^n.
\end{align*}
Therefore, if $C = C_n < -\log_b (t^t/t!^2) - t\log_b (2 b p/e) - \log_b (1/4) - \eps$, i.e.~if $\alpha_C(n) > \atp{t} - 2/\ln b -1 + \eps$ for an arbitrary $\eps>0$, then $\ex (X_{t,r}) =o(1)$.
Thus, a.a.s.
\begin{align*} \chi_t (G_{n,p})
& \ge \frac{n}{\atp{t} -\frac{2}{\ln b} -1 + \eps }. 
\end{align*}

\subsection{Proof of Lemma~\ref{MaxBal}}
Suppose $h(P)$, $P$, $k_i$ are defined as in Lemma~\ref{MaxBal} and furthermore assume that the parts of $P$ are ordered by increasing size, 
i.e.,~$k_1 \le \cdots \le k_r$.
Let $\tilde{P}=\{\tilde{P}_1,\ldots, \tilde{P}_r \}$ be a partition of $V_n$ where for some 
$v \in P_r$ we have $\tilde{P}_1=P_1 \cup \{ v\}$ and $\tilde{P}_r=P_r\setminus \{ v\}$, whereas $\tilde{P}_i=P_i$ for all $1 < i < r$. In other words, we obtain $\tilde{P}$ by moving a vertex from $P_r$ to $P_1$. Lemma~\ref{MaxBal} easily follows from the repeated application of the following.
\begin{lemma} \label{Swap}
For large enough $n$, it holds that, if $k_1 < k_r - 1$, then $h(\tilde{P}) > h(P)$. 
\end{lemma}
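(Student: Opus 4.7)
\proof{(Plan for Lemma~\ref{Swap})}
Since $\tilde P$ and $P$ agree except in the first and last parts, with $\tilde k_1 = k_1 + 1$ and $\tilde k_r = k_r - 1$, only two summands of $h$ change. Setting $g(k) := k^2/2 - (t/2) k \log_b k$ and $\Delta g(k) := g(k+1) - g(k)$, a one-line computation gives
\begin{align*}
h(\tilde P) - h(P)
 = \bigl(g(k_1) - g(k_1+1)\bigr) + \bigl(g(k_r) - g(k_r-1)\bigr)
 = \Delta g(k_r - 1) - \Delta g(k_1).
\end{align*}
Thus the whole lemma reduces to showing $\Delta g(k_r-1) > \Delta g(k_1)$ whenever $k_1 \le k_r - 2$.

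Next I would expand $\Delta g$ explicitly:
\begin{align*}
\Delta g(k)
 = k + \tfrac12 - \tfrac{t}{2}\log_b(k+1) - \tfrac{t}{2}\cdot k\log_b\!\bigl(1+\tfrac{1}{k}\bigr).
\end{align*}
Using $k\log_b(1+1/k) = 1/\ln b + O(1/k)$, this becomes $\Delta g(k) = k + \tfrac12 - \tfrac{t}{2}\log_b(k+1) - \tfrac{t}{2\ln b} + O(1/k)$, and a further differencing gives
\begin{align*}
\Delta g(k+1) - \Delta g(k) = 1 - \tfrac{t}{2}\log_b\!\bigl(1 + \tfrac{1}{k+1}\bigr) + O(1/k) = 1 - O(1/k).
\end{align*}
Hence there is a constant $K_0 = K_0(t,b)$ such that $\Delta g$ is strictly increasing on integers $k \ge K_0$.

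The rest is a short case split. Since $r = \lfloor n/\alpha_C(n)\rfloor$ we have $k_r \ge n/r \ge \alpha_C(n) = 2\log_b n - O(\log_b\log_b n) \to \infty$, so $k_r - 1 \ge K_0$ for all $n$ large enough. If $k_1 \ge K_0$ as well, then monotonicity of $\Delta g$ on $[K_0,\infty)$ combined with $k_1 \le k_r - 2$ gives $\Delta g(k_1) < \Delta g(k_r-1)$ immediately. If instead $k_1 < K_0$, then from the formula for $\Delta g$ (with the two negative terms dropped) one has the crude bound $\Delta g(k_1) \le k_1 + \tfrac12 < K_0 + 1$, while $\Delta g(k_r - 1) \ge k_r - O(\log_b k_r) \to \infty$, again forcing $\Delta g(k_r-1) > \Delta g(k_1)$ for large~$n$. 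In either case $h(\tilde P) - h(P) > 0$, proving the lemma.

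The only delicate point is the quantitative claim that $\Delta g$ is eventually strictly increasing; this is a routine calculus check on the explicit formula above, and the small-$k_1$ regime is handled by the trivial bound rather than by monotonicity, so no global convexity of $g$ is needed.
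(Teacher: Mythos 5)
Your proof is correct, and it takes a genuinely different route from the paper's. You write the change in $h$ as $h(\tilde P) - h(P) = \Delta g(k_r-1) - \Delta g(k_1)$ with $g(k) = k^2/2 - (t/2)k\log_b k$, then reduce the lemma to the statement that $\Delta g$ is eventually strictly increasing (a second-difference computation gives $\Delta g(k+1) - \Delta g(k) \to 1$), with the small-$k_1$ regime handled by the crude bound $\Delta g(k_1) \le k_1 + 1/2$ against $\Delta g(k_r-1) \to \infty$. The paper instead expands $2(h(\tilde P)-h(P))$ directly, arrives at what is essentially the same intermediate quantity $2(k_r-k_1-1) - t(\log_b k_r - \log_b k_1) + O(1/k_1)$, and then splits according to whether $k_r - k_1 \le \ln\ln n$ or not, using the algebraic bound $\log_b k_r - \log_b k_1 \le \log_b(k_r - k_1) + 1$ in the large-gap case. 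Your decomposition isolates the discrete-convexity structure underlying the claim, which makes the case split more natural (it is on $k_1$ alone, against a fixed constant threshold $K_0(t,b)$, rather than on the gap against the ad hoc threshold $\ln\ln n$) and avoids the somewhat delicate logarithm manipulations in the paper's second case. The only thing worth saying more carefully when writing this up is the uniformity of the $O(1/k)$ error in the second-difference estimate, since that is what makes the threshold $K_0$ well-defined; this is a routine Taylor estimate on $x\log_b x$, as you indicate.
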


\begin{proof}
First, $k_1 \le \alpha_C(n)$ and $k_r \ge \alpha_C(n) + 1$, since the number of parts is $r = \lfloor n /\alpha_C(n) \rfloor$.
\begin{align} \label{Difference} 
2(h(\tilde{P})-h(P)) = &-(k_1+1)^2 + t(k_1+1) \log_b (k_1 + 1) 
-(k_r-1)^2 + t(k_r-1) \log_b (k_r - 1) \nonumber \\
& + {k_1}^2 -t k_1 \log_b k_1 + {k_r}^2 - t k_r\log_b k_r \nonumber \\
= & 2(k_r-k_1-1) + t ((k_1+1) \log_b (k_1 + 1) - k_1 \log_b k_1) \nonumber \\
& + t((k_r-1) \log_b (k_r - 1) - k_r\log_b k_r). 
\end{align}
Note that 
\begin{align*}
(k_1+1) \log_b (k_1 + 1) & = (k_1+1) \log_b k_1 + (k_1+1) \log_b \left( 1+1/k_1 \right) \\
& \ge (k_1+1) \log_b k_1 + (k_1+1)\left( 1/k_1 - 1/(2 {k_1}^2) \right) \\
& = k_1 \log_b k_1 + \log_b k_1 + 1 + O\left(1/k_1 \right),
\end{align*}
and similarly, since $k_r\ge \alpha_C (n) + 1 \to \infty$ as $n \to \infty$, 
\begin{align*}
(k_r-1) \log_b (k_r - 1)
& = k_r \log_b k_r - \log_b k_r + 1 - o(1).
\end{align*}
Substituting these estimates into~\eqref{Difference}, we obtain
\begin{align} \label{Diff1} 
2(h(\tilde{P})-h(P)) \ge 2(k_r-k_1-1) - t(\log_b k_r - \log_b k_1) + O\left( 1/k_1 \right).
\end{align}
Assume first that $k_r - k_1 \le \ln \ln n$. 
Then 
\(\log_b (k_r/k_1) \le \log_b (k_r/(k_r -\ln \ln n)) = \log_b ( 1 + \ln \ln n/(k_r -\ln \ln n) ) =o(1)\).
But $k_r -k_1-1\ge 1$ and $k_1 \ge \alpha_C(n) + 1 - \ln \ln n$ and therefore the right-hand side of~\eqref{Diff1} is positive for $n$ large enough. 
If, on the other hand $k_r - k_1 > \ln \ln n$, we write $\log_b k_r = \log_b (k_r - k_1 + k_1) = \log_b (k_r-k_1) + \log_b \left(1+k_1/(k_r- k_1) \right)$. 
So 
\begin{align*}
\log_b k_r - \log_b k_1 & = \log_b (k_r-k_1) + \log_b \left( 1+k_1/(k_r- k_1) \right) - \log_b k_1 \\
& = \log_b (k_r-k_1) + \log_b \left( 1/k_1+1/(k_r- k_1) \right) \\
& = \log_b (k_r-k_1) + \log_b \left( 1/k_1+o(1) \right)
 \le \log_b (k_r-k_1) + 1.
\end{align*}
So 
\[2(k_r-k_1-1) - t (\log_b k_r - \log_b k_1) + O\left( 1/k_1 \right)\ge 2(k_r -k_1 -1) - t (\log_b (k_r-k_1) - 1) \rightarrow \infty\]
as $n \to \infty$ and, by~\eqref{Diff1}, $h(\tilde{P})-h(P) >0$ for $n$ large enough.
\end{proof}

\subsection*{Acknowledgement}
We would like to thank an anonymous referee for a number of helpful comments.

\bibliographystyle{abbrv}
\bibliography{tstable}

\begin{thebibliography}{10}

\bibitem{AnJa85}
J.~A. Andrews and M.~S. Jacobson.
\newblock On a generalization of chromatic number.
\newblock In {\em Proceedings of the sixteenth Southeastern international
  conference on combinatorics, graph theory and computing (Boca Raton, Fla.,
  1985)}, volume~47, pages 33--48, 1985.

\bibitem{Bol88}
B.~Bollob{\'a}s.
\newblock The chromatic number of random graphs.
\newblock {\em Combinatorica}, 8(1):49--55, 1988.

\bibitem{Bol01}
B.~Bollob{\'a}s.
\newblock {\em Random graphs}, volume~73 of {\em Cambridge Studies in Advanced
  Mathematics}.
\newblock Cambridge University Press, Cambridge, 2nd edition, 2001.

\bibitem{BoEr76}
B.~Bollob{\'a}s and P.~Erd{\H{o}}s.
\newblock Cliques in random graphs.
\newblock {\em Math. Proc. Cambridge Philos. Soc.}, 80(3):419--427, 1976.

\bibitem{BoTh00}
B.~Bollob{\'a}s and A.~Thomason.
\newblock The structure of hereditary properties and colourings of random
  graphs.
\newblock {\em Combinatorica}, 20(2):173--202, 2000.

\bibitem{Chv91}
V.~Chv{\'a}tal.
\newblock Almost all graphs with {$1.44n$} edges are {$3$}-colorable.
\newblock {\em Random Structures Algorithms}, 2(1):11--28, 1991.

\bibitem{CCW86}
L.~J. Cowen, R.~H. Cowen, and D.~R. Woodall.
\newblock Defective colorings of graphs in surfaces: partitions into subgraphs
  of bounded valency.
\newblock {\em J. Graph Theory}, 10(2):187--195, 1986.

\bibitem{FlSe09}
P.~Flajolet and R.~Sedgewick.
\newblock {\em Analytic Combinatorics}.
\newblock Cambridge University Press, Cambridge, 1st edition, 2009.

\bibitem{Fri90}
A.~M. Frieze.
\newblock On the independence number of random graphs.
\newblock {\em Discrete Math.}, 81(2):171--175, 1990.

\bibitem{GrMc75}
G.~R. Grimmett and C.~J.~H. McDiarmid.
\newblock On colouring random graphs.
\newblock {\em Math. Proc. Cambridge Philos. Soc.}, 77:313--324, 1975.

\bibitem{Har85}
F.~Harary.
\newblock Conditional colorability in graphs.
\newblock In {\em Graphs and applications (Boulder, Colo., 1982)},
  Wiley-Intersci. Publ., pages 127--136. Wiley, New York, 1985.

\bibitem{HaJo85}
F.~Harary and K.~F. Jones.
\newblock Conditional colorability. {II}. {B}ipartite variations.
\newblock In {\em Proceedings of the Sundance conference on combinatorics and
  related topics (Sundance, Utah, 1985)}, volume~50, pages 205--218, 1985.

\bibitem{Jan90}
S.~Janson.
\newblock Poisson approximation for large deviations.
\newblock {\em Random Structures Algorithms}, 1(2):221--229, 1990.

\bibitem{JLR90}
S.~Janson, T.~{\L}uczak, and A.~Ruci\'nski.
\newblock An exponential bound for the probability of nonexistence of a
  specified subgraph in a random graph.
\newblock In {\em Random Graphs '87 (Pozna\'n, 1987)}, pages 73--87,
  Chichester, 1990. Wiley.

\bibitem{JLR00}
S.~Janson, T.~{\L}uczak, and A.~Rucinski.
\newblock {\em Random graphs}.
\newblock Wiley-Interscience Series in Discrete Mathematics and Optimization.
  Wiley-Interscience, New York, 2000.

\bibitem{KaMc07}
R.~J. Kang and C.~J.~H. McDiarmid.
\newblock The $t$-improper chromatic number of random graphs.
\newblock In {\em Proceedings of the 4th European Conference on Combinatorics,
  Graph Theory and Applications (Seville, 2007)}, volume~29, pages 419--425,
  2007.

\bibitem{KaMc10}
R.~J. Kang and C.~J.~H. McDiarmid.
\newblock The $t$-improper chromatic number of random graphs.
\newblock {\em Combinatorics, Probability and Computing}, 19:87--98, 2010.

\bibitem{Luc91a}
T.~{\L}uczak.
\newblock The chromatic number of random graphs.
\newblock {\em Combinatorica}, 11(1):45--54, 1991.

\bibitem{MaKu90}
D.~Matula and L.~Ku{\v{c}}era.
\newblock An expose-and-merge algorithm and the chromatic number of a random
  graph.
\newblock In {\em Random Graphs '87 (Pozna\'n, 1987)}, pages 175--187. Wiley,
  Chichester, 1990.

\bibitem{Mat70}
D.~W. Matula.
\newblock On the complete subgraphs of a random graph.
\newblock In {\em Proceedings of the 2nd Chapel Hill Conference on
  Combinatorial Mathematics and its Applications (Chapel Hill, N. C., 1970)},
  pages 356--369, 1970.

\bibitem{Mat72}
D.~W. Matula.
\newblock The employee party problem.
\newblock {\em Notices AMS}, 19(2):A--382, 1972.

\bibitem{Mat76}
D.~W. Matula.
\newblock The largest clique size in a random graph.
\newblock Tech.~Rep.~1987, Department of Computer Science, Southern Methodist
  University, Dallas, Texas., 1976.

\bibitem{McD90}
C.~J.~H. McDiarmid.
\newblock On the chromatic number of random graphs.
\newblock {\em Random Structures and Algorithms}, 1(4):435--442, 1990.

\bibitem{PaSt09}
K.~Panagiotou and A.~Steger.
\newblock A note on the chromatic number of a dense random graph.
\newblock {\em Discrete Mathematics}, 309(10):3420--3423, 2009.

\bibitem{Sch92}
E.~R. Scheinerman.
\newblock Generalized chromatic numbers of random graphs.
\newblock {\em SIAM J. Discrete Math.}, 5(1):74--80, 1992.

\bibitem{Sco08+}
A.~Scott.
\newblock On the concentration of the chromatic number of random graphs.
\newblock arXiv:0806.0178v1 [math.CO], 2008.

\bibitem{ShSp87}
E.~Shamir and J.~Spencer.
\newblock Sharp concentration of the chromatic number on random graphs
  ${G}_{{n, p}}$.
\newblock {\em Combinatorica}, 7(1):121--129, 1987.

\end{thebibliography}

\end{document}